\newcommand{\eps}{\varepsilon}
\theoremstyle{plain}
\newtheorem{thm}{Theorem}[section]
\newtheorem*{thm*}{Theorem}
\theoremstyle{plain}
\newtheorem{lem}[thm]{Lemma}
\newtheorem{cor}[thm]{Corollary}
\theoremstyle{definition}
\newtheorem{rem}{Remark}[section]
\newtheorem{remark}{Remark}[section]
\newtheorem{proposition}{Proposition}[section]
\newcommand{\cD}{{\mathcal D}}
\newcommand{\mR}{\mathbb{R}}
\newtheorem{lemma}{Lemma}[section]
\newcommand{\ro}{\mathbb{R}}
\newcommand{\md}[1]{\left|#1 \right|}
\newcommand{\nrm}[1]{\left|\left| #1 \right|\right|}
\newcommand{\bct}[1]{\left(#1\right)}
\newcommand{\Sb}[1]{\left \lbrace #1\right\rbrace}
\newcommand{\epss}{\epsilon}
\newcommand{\dsp}{\displaystyle}
\newcommand{\R}{\mathbb{R}}
\numberwithin{equation}{section} \allowdisplaybreaks
        \title[Interpolation Inequalities]{Gagliardo-Nirenberg inequalities in fractional Coulomb-Sobolev spaces for radial functions}
        \author[A. Mallick]{Arka Mallick}
        \address[A. Mallick]{Department of Mathematics \newline\indent
	IISc, Bangalore, India}
\email{arkamallick@iisc.ac.in}
        \author[H.-M. Nguyen]{Hoai-Minh Nguyen}
  \address[H.-M. Nguyen]{Laboratoire Jacques Louis Lions \newline\indent
	Sorbonne Universit\'e \newline\indent
	4 Place Jussieu, 75252, Paris, France}
\email{hoai-minh.nguyen@sorbonne-universite.fr}
\begin{document}

\begin{abstract}  
We extend the range of parameters associated with the Gagliardo-Nirenberg interpolation inequalities in the fractional Coulomb-Sobolev spaces for  radial functions. 
We also study the optimality of this newly extended range of parameters.
\end{abstract}

\maketitle
\tableofcontents

\noindent {\bf MSC2010}: {26D10, 26A54}\\
\noindent {\bf Keywords}: {Coulomb-Sobolev inequality, Caffarelli-Kohn-Nirenberg inequality, Hardy-Lieb-Thirring inequality}.

\section{Introduction}
 
 In this article we continue our study of Gagliardo-Nirenberg(GN) interpolation inequality in the Coulomb-Sobolev spaces  \cite{NguMalJFA}. Developing the method in \cite{NgSob3}, we proved  the following result in \cite{NguMalJFA}. 
 
 \begin{thm}$($\cite[Theorem 1.1]{NguMalJFA}$) $\label{thm1}
 Let $d \ge 1$,  $\gamma>1$, $1\le p, \, q  < + \infty$, $0 \le s \le 1$, $\alpha \in (0, d)$ and $0 \le  \beta_1, \beta_2 < + \infty $ be such that 
\begin{equation}
\beta_1p+2\beta_2q =1,  \quad (d-sp)\beta_1+(d+\alpha)\beta_2 = d/ \gamma, \label{betaEquations} 
\end{equation}
and
\begin{equation}
\beta_1 \gamma +\beta_2\gamma\geq 1. \label{ConCondtn}
\end{equation}
There exists a constant $C>0$ such that 
\begin{equation}\label{CSF1-*}
\| g \|_{L^\gamma(\ro^d)} \leq C \| g\|_{\dot W^{s, p}(\mR^d)}^{\beta_1 p} \bct{\int_{\ro^d}\int_{\ro^d} \frac{|g(x)|^q|g(y)|^q}{|x-y|^{d-\alpha}}dxdy}^{\beta_2} \mbox{ for all } g \in C^1_c(\mR^d).   
\end{equation} 
 \end{thm}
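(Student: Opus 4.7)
The plan is to prove~\eqref{CSF1-*} via a dyadic level-set decomposition of $g$, using two independent bounds on the measures of super-level sets, followed by an interpolation that is forced to match the scaling~\eqref{betaEquations} together with the constraint~\eqref{ConCondtn}. This follows the strategy developed by the second author in~\cite{NgSob3}.

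Setting $\mu_n := |\{x \in \mathbb{R}^d : |g(x)| > 2^n\}|$ for $n \in \mathbb{Z}$, the layer-cake formula yields $\|g\|_{L^\gamma(\mathbb{R}^d)}^\gamma \sim \sum_{n} 2^{n\gamma}(\mu_n - \mu_{n+1})$. Two independent bounds on $\mu_n$ are then produced. From the fractional Sobolev embedding $\dot W^{s,p}(\mathbb{R}^d) \hookrightarrow L^{dp/(d-sp)}(\mathbb{R}^d)$ applied to a truncation of $g$, or more sharply from a nonlocal Chebyshev-type inequality that counts Gagliardo pairs $(x,y)$ straddling the level $2^n$, one obtains
\[
2^{np}\,\mu_n^{(d-sp)/d} \;\lesssim\; \|g\|_{\dot W^{s,p}(\mathbb{R}^d)}^{p}.
\]
Restricting the Coulomb double integral to $\{|g|>2^n\}\times\{|g|>2^n\}$ and minimizing via the Riesz rearrangement inequality gives
\[
2^{2nq}\,\mu_n^{(d+\alpha)/d} \;\lesssim\; J(g), \qquad J(g) := \iint_{\mathbb{R}^d\times\mathbb{R}^d} \frac{|g(x)|^q|g(y)|^q}{|x-y|^{d-\alpha}}\,dx\,dy.
\]

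Raising these two bounds to the powers $\beta_1\gamma$ and $\beta_2\gamma$ respectively and multiplying, the first relation in~\eqref{betaEquations} combined with~\eqref{ConCondtn} forces the exponent of $\mu_n$ on the left to be at least $1$, while the second relation in~\eqref{betaEquations} forces the total exponent of $2^n$ to be exactly $-n\gamma$. Summing in $n \in \mathbb{Z}$ (and, if needed, splitting the sum at a crossover dyadic scale $n_0$ where the Sobolev and Coulomb bounds coincide, so that each half is a convergent geometric series) gives
\[
\|g\|_{L^\gamma(\mathbb{R}^d)}^\gamma \;\lesssim\; \|g\|_{\dot W^{s,p}(\mathbb{R}^d)}^{p\beta_1\gamma}\, J(g)^{\beta_2\gamma},
\]
and taking $\gamma$-th roots produces~\eqref{CSF1-*}.

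The main obstacle is the fractional Sobolev side of the measure estimate: because $\dot W^{s,p}$ is a nonlocal seminorm that does not restrict cleanly to super-level sets, the crude Sobolev embedding alone is too coarse to cover the full range allowed by~\eqref{betaEquations}--\eqref{ConCondtn}, and one must instead employ the refined nonlocal truncation argument of~\cite{NgSob3}. The second subtle point is the role of~\eqref{ConCondtn}: the assumption $(\beta_1+\beta_2)\gamma \geq 1$ is precisely what makes the geometric interpolation of the two measure bounds reassemble into a summable dyadic series with total matching the homogeneity of the target inequality; without it, the dyadic summation fails to close.
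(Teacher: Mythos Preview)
First, a framing point: the present paper does not prove this statement. Theorem~\ref{thm1} is quoted from the authors' earlier work~\cite{NguMalJFA} and serves only as input for the radial results; the paper says nothing about its proof beyond the remark that it was obtained by ``developing the method in~\cite{NgSob3}''. So there is no in-paper argument to compare against.

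That said, your outline has a genuine gap in the Coulomb half of the level-set estimate. You claim
\[
2^{2nq}\,\mu_n^{(d+\alpha)/d}\;\lesssim\;J(g)
\]
by restricting the double integral to $E_n\times E_n$ with $E_n=\{|g|>2^n\}$ and ``minimizing via the Riesz rearrangement inequality''. Restriction gives $J(g)\ge 2^{2nq} I(E_n)$ with $I(E)=\iint_{E\times E}|x-y|^{-(d-\alpha)}\,dx\,dy$, but to reach your displayed bound you would need $I(E)\gtrsim |E|^{(d+\alpha)/d}$. Riesz rearrangement goes the \emph{other} way: since $|x-y|^{-(d-\alpha)}$ is already symmetric-decreasing, it yields $I(E)\le I(E^*)\sim |E|^{(d+\alpha)/d}$. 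And the lower bound you need is simply false: if $E$ is a union of $N$ unit balls placed far apart, then $|E|\sim N$ while $I(E)\sim N$, so $I(E)/|E|^{(d+\alpha)/d}\sim N^{-\alpha/d}\to 0$. There is thus no Chebyshev-type control of $\mu_n$ from the Coulomb energy alone, and the dyadic interpolation you sketch cannot close. This is also why your account of where condition~\eqref{ConCondtn} enters is off: from~\eqref{betaEquations} alone the combined $\mu_n$-exponent after multiplying the two bounds is exactly $1$ (the second relation, not the first, fixes it), so \eqref{ConCondtn} must be doing something other than adjusting that exponent. In the actual argument one never converts $J(g)$ into a single-scale measure bound; the Coulomb term has to be handled through a genuinely multi-scale mechanism, and \eqref{ConCondtn} is what makes that mechanism sum.
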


In \Cref{thm1} and in what follows, for any open set $\Omega \subset \mR^d$, the following notation is used:  
\begin{equation}
\| g \|_{\dot W^{s, p}(\Omega)} =  \left\{\begin{array}{cl} \dsp \bct{\int_{\Omega}\int_{\Omega} \frac{|g(x)-g(y)|^p}{|x-y|^{d+sp}}dxdy}^{\frac{1}{p}} & \mbox{ for } 0 < s < 1, \\[6pt]
\dsp \left( \int_{\Omega} |\nabla g (x)|^p \, dx \right)^{1/p} &  \mbox{ for } s = 1, \\[6pt]
\dsp \left( \int_{\Omega} |g (x)|^p \, dx \right)^{1/p} & \mbox{ for } s =0. 
\end{array}\right.
\end{equation}

Condition \eqref{betaEquations} is due to the scaling invariance of \eqref{CSF1-*} which automatically makes them optimal whereas the optimality of condition \eqref{ConCondtn} was proved in \cite{NguMalJFA}.  Set
\begin{equation} \label{cD}
\cD  = p(d+\alpha)-2q(d-sp). 
\end{equation}
In the case $\cD \neq 0$, one can compute  $\beta_1$ and $\beta_2$ as a function of $d, \, \alpha, \, p, \, q, \, s, \gamma$ by 
 \begin{equation}\label{betaValue}
 \beta_1 = \frac{\gamma(d+\alpha)-2qd}{\gamma\big(p(d+\alpha)-2q(d-sp) \big)} \quad \mbox{ and } \quad \beta_2 = \frac{pd-\gamma(d-sp)}{\gamma\big(p(d+\alpha)-2q(d-sp) \big)}.
 \end{equation}

Gagliardo-Nirenberg inequalities in different function spaces play a crucial role in the study of non-linear PDEs. First instance of such an inequality appeared in works of  Gagliardo and Nirenberg \cite{Gagliardo59, Nirenberg59}. 
In the context of Coulomb-Sobolev spaces, which are relevant in Thomas--Fermi--Dirac--von Weizs\"acker models of density functional theory \cite{BBL81, LL05, Lieb81} or  in Hartree--Fock theory \cite{CLL01, LS10, CDSS13}, the study of GN type inequalities was initiated by Lions \cite{Lions81, pllCMP} in connection to the study of the Hartree--Fock  equation. In fact, Lions proved \eqref{CSF1-*} for $\gamma=3$, $p=q=\alpha=2$, $d=3$ and $s=1$.
Subsequently many extensions of  \eqref{CSF1-*} have been established. One was derived by Bellazzini, Frank, Visciglia \cite{BelFrVis} where they proved \eqref{CSF1-*} in the case $p = 2$, $q =2$, and $0 < s < 1$ (see also \cite[(21)]{LunNamPor} for the case $p=q=2$, $\alpha = d - 2 s$). This was extended by Mercuri, Moroz, and Van Schaftingen \cite{MerMorSch} to the case $p=2$ and $s=1$. Bellazzini, Ghimenti, Mercuri, Moroz, and Van Schaftingen  \cite{MorSch'18} then extended to the case $p=2$ and $0 < s <1$.

\medskip 
A related context is the one of Caffarelli-Kohn-Nirenberg (CKN) inequalities. More precisely,   let $d\geq 1$, $0<s<1$, $p \ge 1$, $q\geq 1$, $\tau \ge 1$, $0 <  a \leq 1$,  and $\alpha_1, \alpha_2,\beta, \gamma \in \ro$. Set $\widetilde{\alpha} = \alpha_1+\alpha_2 $ and define $\sigma$ by $\gamma = a \sigma + (1-a) \beta$.  Assume that 
\begin{align}\label{balance-law2}
\frac{1}{\tau}+\frac{\gamma}{d}= a\bct{\frac{1}{p}+ \frac{\widetilde{\alpha}-s}{d}}+(1-a)\bct{\frac{1}{q}+\frac{\beta}{d}},
\end{align}
and the following conditions hold 
\begin{equation}\label{cond-s-1}
0 \leq \widetilde{\alpha}-\sigma
\end{equation}
and 
\begin{equation}\label{cond-s-2}
\widetilde{\alpha}-\sigma\leq s \text{ if } \frac{1}{\tau}+\frac{\gamma}{d}= \frac{1}{p}+\frac{\widetilde{\alpha}-s}{d}.
\end{equation}
Nguyen and Squassina \cite{HmnSqa} proved that if $\frac{1}{\tau}+\frac{\gamma}{d}>0$,  then there exists some positive constant $C$ such that   
\begin{multline}\label{CKN-fractional1-*}
\| |x|^\gamma g \|_{L^\tau(\ro^d)} \\[6pt]
\leq C \bct{\int_{\ro^d}\int_{\ro^d} \frac{|g(x)-g(y)|^p|x|^{\alpha_1p}|y|^{\alpha_2p}}{|x-y|^{d+sp}}dx dy }^\frac{a}{p}\left \| |x|^\beta g \right\|^{1-a}_{L^q(\ro^d)} \; \; \forall \,  g \in C^1_c(\R^d). 
\end{multline}
This extends the full range of parameters of the well-known CKN inequalities due to Caffarelli, Kohn, and Nirenberg \cite{CKN} (see also \cite{CKN-original}) for $s=1$ to the fractional Sobolev spaces ($0< s < 1$).  Concerning the radial case, 
the range of parameters are larger. Let $C^1_{c, \rm{rad}}(\ro^d)$ denotes the space of all radial, continuously differentiable and compactly supported functions defined in $\mR^d$ with $d \ge 1$. We prove in \cite{HmnMalCRAS} the following result.

\begin{thm} \label{thmMN}$($\cite[Theorem 1.1]{HmnMalCRAS}$)$ Let $d \ge 2$ and assume \eqref{balance-law2}, $\frac{1}{\tau}+\frac{\gamma}{d}>0$, and  
\begin{equation}\label{cond-s-1-rad}
- (d-1) s  \leq \widetilde{\alpha}-\sigma < 0. 
\end{equation}
Then
\begin{multline}\label{CKN-fractional1}
\| |x|^\gamma g \|_{L^\tau(\ro^d)} \\[6pt]
\leq C \bct{\int_{\ro^d}\int_{\ro^d} \frac{|g(x)-g(y)|^p|x|^{\alpha_1p}|y|^{\alpha_2p}}{|x-y|^{d+sp}}dx dy }^\frac{a}{p}\left \| |x|^\beta g \right\|^{1-a}_{L^q(\ro^d)}, \; \; \forall \,  g \in C^1_{c, \rm{rad}} (\R^d). 
\end{multline}
\end{thm}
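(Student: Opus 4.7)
The plan is to reduce the $d$-dimensional radial CKN inequality to a one-dimensional non-radial CKN inequality on $(0,\infty)$ through spherical averaging, and then invoke Theorem \ref{CKN-fractional1-*} in dimension one. For a radial function $g(x) = u(|x|)$, the weighted $L^\tau$ and $L^q$ norms on both sides of \eqref{CKN-fractional1} become immediately one-dimensional:
\begin{equation*}
\||x|^\gamma g\|_{L^\tau(\R^d)}^\tau = \omega_{d-1}\int_0^\infty |u(r)|^\tau r^{\gamma\tau+d-1}\,dr,
\end{equation*}
and similarly for the $L^q$-norm. The essential work is the reduction of the fractional seminorm: writing $x = r\omega_1$ and $y = \rho\omega_2$ with $\omega_i \in S^{d-1}$, and exploiting $g(r\omega_1) - g(\rho\omega_2) = u(r) - u(\rho)$, one obtains
\begin{equation*}
\int_{\R^d}\!\!\int_{\R^d} \frac{|g(x)-g(y)|^p |x|^{\alpha_1 p}|y|^{\alpha_2 p}}{|x-y|^{d+sp}}\,dxdy = \int_0^\infty\!\!\int_0^\infty |u(r)-u(\rho)|^p\, r^{\alpha_1 p + d - 1}\rho^{\alpha_2 p + d - 1}\, K(r,\rho)\, dr\, d\rho,
\end{equation*}
with angular kernel $K(r,\rho) = \int_{S^{d-1}}\!\!\int_{S^{d-1}} |r\omega_1 - \rho\omega_2|^{-(d+sp)}\, d\sigma(\omega_1)\,d\sigma(\omega_2)$.

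An explicit spherical-coordinate computation, reducing to a one-dimensional angular integral via rotation invariance and the substitution $v = \theta\sqrt{r\rho}/|r-\rho|$, yields the sharp near-diagonal lower bound
\begin{equation*}
K(r,\rho) \gtrsim (r\rho)^{-(d-1)/2}\, |r-\rho|^{-1-sp} \qquad \text{for } \tfrac{1}{2}\rho \le r \le 2\rho,
\end{equation*}
together with the faster off-diagonal decay $K(r,\rho) \lesssim (r+\rho)^{-d-sp}$. Restricting to the near-diagonal region, the $d$-dimensional seminorm dominates a weighted one-dimensional seminorm of $u$ on $(0,\infty)$ with effective weights $\alpha_i' := \alpha_i + \tfrac{d-1}{2p}$ (so $\widetilde\alpha' = \widetilde\alpha + \tfrac{d-1}{p}$), while the outer $L^\tau$ and $L^q$ norms acquire the shifts $\gamma' := \gamma + \tfrac{d-1}{\tau}$ and $\beta' := \beta + \tfrac{d-1}{q}$. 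A direct algebraic check shows that \eqref{balance-law2} for the original parameters is equivalent to the one-dimensional balance law for the primed parameters, and crucially, using $\gamma = a\sigma + (1-a)\beta$, a short computation yields
\begin{equation*}
\widetilde\alpha' - \sigma' = \frac{(\widetilde\alpha - \sigma) + (d-1)s}{d},
\end{equation*}
so that the hypothesis \eqref{cond-s-1-rad} translates precisely to $0 \le \widetilde\alpha' - \sigma'$, which is \eqref{cond-s-1} in dimension one.

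The proof concludes by applying Theorem \ref{CKN-fractional1-*} in dimension one to the even extension $\tilde u(r) := u(|r|)$, using that the $\R$-seminorm of $\tilde u$ is comparable to the $(0,\infty)$-seminorm of $u$ because the mixed-sign contribution has kernel $(r+\rho)^{-1-sp} \le |r-\rho|^{-1-sp}$. The main technical obstacles are: (i) the sharp two-sided bound on $K(r,\rho)$ with uniform constants in the near-diagonal regime, and (ii) verifying that the 1D near-diagonal seminorm is equivalent to the full one-dimensional seminorm (standard for $0 < s < 1$ after absorbing the far-field part into the $L^q$-norm). The mechanism producing the extra admissible range $-(d-1)s \le \widetilde\alpha - \sigma < 0$ is the cancellation between the $r^{d-1}\rho^{d-1}$ Jacobian from spherical coordinates and the kernel's $(r\rho)^{-(d-1)/2}$ decay, which shifts the effective weight by exactly $(d-1)s$ and is unavailable in the non-radial setting.
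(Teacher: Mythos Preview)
The paper does not prove \Cref{thmMN}; it is quoted from \cite{HmnMalCRAS} and used as a black box. So there is no in-paper proof to compare against, and I evaluate your argument on its own merits.

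Your overall strategy---reduce to a one-dimensional CKN inequality via polar coordinates---is the natural one, and your parameter bookkeeping is correct: the kernel estimate $K(r,\rho)\sim (r\rho)^{-(d-1)/2}|r-\rho|^{-1-sp}$ on the near-diagonal is right, the shifts $\alpha_i'=\alpha_i+\tfrac{d-1}{2p}$, $\gamma'=\gamma+\tfrac{d-1}{\tau}$, $\beta'=\beta+\tfrac{d-1}{q}$ are correct, and the identity
\[
\widetilde\alpha'-\sigma'=\frac{(\widetilde\alpha-\sigma)+(d-1)s}{d}
\]
is valid and explains precisely why the radial range extends down to $\widetilde\alpha-\sigma=-(d-1)s$.

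The genuine gap is step (ii). The $d$-dimensional seminorm only dominates the \emph{near-diagonal} one-dimensional seminorm, not the full one: away from the diagonal (say $r>2\rho$) one has $K(r,\rho)\sim r^{-d-sp}$, so the $d$-dimensional integrand carries the weight $r^{\alpha_1 p-1-sp}\rho^{\alpha_2 p+d-1}$, whereas the full 1D integrand carries $r^{\alpha_1 p+(d-1)/2-1-sp}\rho^{\alpha_2 p+(d-1)/2}$. Their ratio is $(r/\rho)^{(d-1)/2}\to\infty$, so the $d$-dimensional seminorm does \emph{not} control the far-field 1D seminorm. Your proposed remedy, ``absorb the far-field part into the $L^q$-norm'', is not justified: splitting $|u(r)-u(\rho)|^p\lesssim |u(r)|^p+|u(\rho)|^p$ and integrating the free variable produces integrals that need not converge (they require $\alpha_2'p>-1$ and $\alpha_1'p<sp$, neither of which is assumed), and even when they do, the resulting weighted $L^p$-norm has no a priori relation to the weighted $L^q$-norm.

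What you actually need is the \emph{stronger} statement that the one-dimensional CKN inequality \eqref{CKN-fractional1-*} holds with the seminorm restricted to $\{\Lambda^{-1}\le r/\rho\le\Lambda\}$ on the right-hand side. This is in fact how the Nguyen--Squassina argument proceeds internally (dyadic decomposition, with only neighbouring annuli interacting), but it is not how the result is \emph{stated}, so you cannot simply cite \eqref{CKN-fractional1-*} as a black box. You must either (a) invoke or reprove a truncated-seminorm version of the 1D CKN, or (b) argue differently for the off-diagonal contribution. Until that is done, the proof is incomplete.
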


\begin{remark}
Related results of \eqref{CKN-fractional1-*} and \eqref{CKN-fractional1} in the case $\frac{1}{\tau}+\frac{\gamma}{d} \le 0$ are also studied in \cite{HmnSqa,HmnMalCRAS}. 
\end{remark}

In this article, we are in pursuit of an optimal version of the inequality \eqref{CSF1-*} for radial functions in the spirit of \cite{HmnMalCRAS}.  Recall that $\cD$ is defined in \eqref{cD}. Here is the first result of the paper. 

\begin{thm}\label{thm1-rad}
Let $d \ge 2$, $0< s \le 1$, $1<\gamma<+\infty, \ 1\le p, \, q < + \infty$, $1 <\alpha<d$, $0< \beta_1, \, \beta_2 < + \infty$ be such that \eqref{betaEquations}  holds and $\cD \neq 0$. Assume that either 
\begin{equation}\label{thm1-rad-cd1}
\beta_1 \gamma + \frac{d+\alpha-2}{d-1} \beta_2 \gamma    >  1 
\end{equation}
or  
\begin{equation}\label{thm1-rad-cd2}
\left( \beta_1 \gamma + \frac{d+\alpha-2}{d-1} \beta_2 \gamma = 1  \mbox{ and }   q(1-sp)=p \right).
\end{equation}
Then 
\begin{equation}\label{CSF1}
\| g\|_{L^\gamma(\ro^d)} \leq C \| g\|_{\dot W^{s, p}(\mR^d)}^{\beta_1 p} \bct{\int_{\ro^d}\int_{\ro^d} \frac{|g(x)|^q|g(y)|^q}{|x-y|^{d-\alpha}}dxdy}^{\beta_2}, \mbox{ for all } g \in C^1_{c, \rm{rad}}(\R^d). 
\end{equation} 
\end{thm}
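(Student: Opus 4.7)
The plan is to combine a dyadic annular decomposition with the extended radial Caffarelli--Kohn--Nirenberg inequality \Cref{thmMN}, in the spirit of the proof of \Cref{thm1} in \cite{NguMalJFA}. The key improvement in the radial case comes from two sources. First, for $\alpha > 1$ (which is where this hypothesis enters) the angular average
\begin{equation*}
\int_{S^{d-1}} |re_1 - \rho\omega|^{\alpha-d}\,d\sigma(\omega) \asymp \max(r,\rho)^{\alpha-d}
\end{equation*}
gives, for radial $g$ with $\tilde g(r) := g(re_1)$, the one-dimensional reduction
\begin{equation*}
\int_{\mR^d}\!\int_{\mR^d}\frac{|g(x)|^q|g(y)|^q}{|x-y|^{d-\alpha}}\,dx\,dy \,\asymp\, \int_0^\infty\!\int_0^\infty |\tilde g(r)|^q|\tilde g(\rho)|^q(r\rho)^{d-1}\max(r,\rho)^{\alpha-d}\,dr\,d\rho.
\end{equation*}
Second, \Cref{thmMN} permits the weight-difference $\tilde\alpha - \sigma$ to lie in the larger interval $[-(d-1)s, 0)$, strictly to the left of the non-radial range $[0,s]$. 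A thin-shell scaling heuristic (taking $g$ to be a radial shell of width $\delta \ll 2^k$ inside $A_k = \{2^k \le |x| < 2^{k+1}\}$, of amplitude $V$, so that $\|g\|_{L^\gamma}^\gamma \sim V^\gamma 2^{k(d-1)}\delta$, $\|g\|_{\dot W^{s,p}}^p \sim V^p 2^{k(d-1)}\delta^{1-sp}$, and $\mathrm{Coulomb}(g) \sim V^{2q}2^{k(d+\alpha-2)}\delta^2$, and matching the $2^k$-powers on both sides of the target inequality) predicts that the sharp condition is exactly \eqref{thm1-rad-cd1}, with the equality case further requiring the $\delta$-power identity $q(1-sp) = p$ of \eqref{thm1-rad-cd2}.

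To prove sufficiency, I would decompose $g = \sum_k g_k$ with $g_k$ supported on $A_k$, and establish for each $k$ a per-annulus estimate of the form
\begin{equation*}
\|g_k\|_{L^\gamma}^\gamma \le C \, 2^{k\nu}\, \|g_k\|_{\dot W^{s,p}}^{\beta_1 p\gamma}\,\mathrm{Coulomb}(g_k)^{\beta_2\gamma},
\end{equation*}
where the exponent $\nu \le 0$ arises from applying \Cref{thmMN} to $g_k$ with weights in the extended range $\tilde\alpha - \sigma \in [-(d-1)s, 0)$: evaluating the weights $|x|^w \sim 2^{kw}$ on $A_k$ produces a factor $2^{ka(\tilde\alpha - \sigma)\gamma}$, which is nonpositive exactly when $\tilde\alpha - \sigma \le 0$. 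This dyadic gain $2^{k\nu}$ is a pure radial effect (in the non-radial case the same procedure gives $\nu = 0$), and it is precisely what allows the summation over $k$ to close under the weaker condition \eqref{thm1-rad-cd1}: after summing, the dyadic losses in $\|g_k\|_{\dot W^{s,p}}^p$ and $\mathrm{Coulomb}(g_k)$ are balanced against $2^{k\nu}$, and one absorbs $\sum_k \|g_k\|_{\dot W^{s,p}}^p \le C\|g\|_{\dot W^{s,p}}^p$ and $\sum_k \mathrm{Coulomb}(g_k) \le C\,\mathrm{Coulomb}(g)$, the cross-interactions between distant annuli being controlled by the off-diagonal decay of the Sobolev kernel ($s \le 1$) and the integrability of the Coulomb kernel at infinity ($\alpha < d$).

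The main obstacle I expect is the endpoint case \eqref{thm1-rad-cd2}, where \eqref{thm1-rad-cd1} degenerates into equality and the per-annulus estimate is at the critical value $\tilde\alpha - \sigma = -(d-1)s$ of \Cref{thmMN}. The supplementary algebraic relation $q(1-sp) = p$ is precisely what makes this boundary value of $\tilde\alpha - \sigma$ compatible with the scaling law \eqref{betaEquations} (it forces the $\delta$-scaling of the thin-shell heuristic to be saturated as well). Handling this endpoint likely requires an adaptive threshold argument in the spirit of the classical Strauss radial argument, choosing a cut-off radius $R_\ast$ depending on the profile of $g$ itself, in order to avoid a logarithmic loss at the boundary of the admissible range of \Cref{thmMN}.
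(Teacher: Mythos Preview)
Your dyadic proposal has a genuine gap: the claimed gain $2^{k\nu}$ with $\nu<0$ cannot occur. The target inequality \eqref{CSF1} is scale-invariant (this is exactly \eqref{betaEquations}), so any per-annulus estimate of the form you wrote must have $\nu=0$. Concretely, if you apply \Cref{thmMN} to $g_k$ with $\gamma_{\mathrm{weight}}=0$, $\alpha_1=\alpha_2=0$, weight $\beta$ on the $L^q$ factor, and demand that the output exponent be $\gamma$ and that $a=\beta_1p$, then the balance law \eqref{balance-law2} together with \eqref{betaEquations} forces $\beta=-(d-\alpha)/(2q)$; substituting the annular equivalence $\|g_k\|_{L^q}^q\asymp 2^{k(d-\alpha)/2}\,\mathrm{Coulomb}(g_k)^{1/2}$ then makes the power of $2^k$ vanish identically. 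With $\nu=0$ your summation step
\[
\sum_k \big(\|g_k\|_{\dot W^{s,p}}^p\big)^{\beta_1\gamma}\big(\mathrm{Coulomb}(g_k)\big)^{\beta_2\gamma}\le C\big(\|g\|_{\dot W^{s,p}}^p\big)^{\beta_1\gamma}\big(\mathrm{Coulomb}(g)\big)^{\beta_2\gamma}
\]
closes only under the \emph{non-radial} hypothesis $\beta_1\gamma+\beta_2\gamma\ge1$, so the argument recovers \Cref{thm1} but not the extended range \eqref{thm1-rad-cd1}.

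The paper proceeds quite differently. It applies \Cref{thmMN} \emph{globally} (not on annuli) with the perturbed weight $\beta=-(d-\alpha)/(2q)\mp\eps/q$; this lands exactly at the radial endpoint $\widetilde\alpha-\sigma=-(d-1)s$ and produces the inequality $\|g\|_{L^{\gamma_\eps}}\le C\|g\|_{\dot W^{s,p}}^{a_\eps}\big\||x|^{\beta}g\big\|_{L^q}^{1-a_\eps}$ with $\gamma_\eps\to\gamma_{\mathrm{rad}}$ (this is \Cref{Rad}). The bridge from the weighted $L^q$ norm to the Coulomb energy---which your proposal does not supply---is Ruiz's estimate (\Cref{ruiz}), giving one-sided control of $\int_{|x|\gtrless R}|g|^q|x|^{-(d-\alpha)/2\mp\eps}dx$ by $\mathrm{Coulomb}(g)^{1/2}$; the complementary region is handled using the non-radial inequality at $\gamma_{cs}$, the Strauss-type pointwise bound (\Cref{StLemFrc}), or \Cref{wkNi}, depending on the sign of $\cD$ and of $p+q(sp-1)$. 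The full range $(\gamma_{\mathrm{rad}},\gamma_{cs})$ (or the reverse interval) is then obtained by interpolating $\gamma_\eps$ with $\gamma_{cs}$. The endpoint \eqref{thm1-rad-cd2} is not handled by an adaptive cutoff as you suggest, but directly: when $q(1-sp)=p$ one has $\gamma=\gamma_{\mathrm{rad}}=q$, and \Cref{wkNi} plus \eqref{ruizMnsEpss} give $\|g\|_{L^q}$ bounded immediately.
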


\begin{remark} \Cref{thm1-rad} deals with the case $1 < \alpha < d$ and extends \Cref{thm1} in this case for radial functions under the assumption that $\cD \neq 0$.  One cannot extend \Cref{thm1} for radial functions in the case $0 < \alpha \le 1$ (see \Cref{thm1-rad-opt1} below).  
\end{remark}

\begin{remark} Condition \eqref{thm1-rad-cd2} implicitly implies that $sp < 1$. 
\end{remark}

Our next result addresses the optimality of the range \eqref{thm1-rad-cd1}-\eqref{thm1-rad-cd2}. 

\begin{thm}\label{thm1-rad-opt}
Let $d \ge 2$, $0< s \le 1$, $1< \gamma <+\infty , \,1\le p, \, q < + \infty$, $1 <\alpha<d$, $0 <  \beta_1, \, \beta_2 < + \infty$ be such that \eqref{betaEquations} holds and $\cD \neq 0$. Assume that either 
\begin{equation}\label{thm1-rad-cd1-Opt}
 \beta_1 \gamma + \frac{d+\alpha-2}{d-1} \beta_2 \gamma    < 1 
\end{equation}
or
\begin{equation}\label{thm1-rad-cd2-Opt}
\left( \beta_1 \gamma + \frac{d+\alpha-2}{d-1} \beta_2 \gamma = 1 \mbox{ and }  q(1-sp)\neq p \right).
\end{equation}
Then \eqref{CSF1} does {\bf not} hold. 
\end{thm}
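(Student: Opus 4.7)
The plan is to produce explicit radial test functions along which the ratio of the two sides of \eqref{CSF1} diverges; the two regimes are handled by quite different constructions.

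For case \eqref{thm1-rad-cd1-Opt} I would take a single annular bump $u_R(x)=\phi(|x|-R)$, where $\phi$ is a fixed smooth bump supported in $[-1,1]$, and let $R\to\infty$. Standard polar-coordinate asymptotics---using $d\ge 2$ so the angular integral of $|x-y|^{-(d+sp)}$ at radial gap $|r-r'|$ reduces to $R^{-(d-1)}|r-r'|^{-1-sp}$, and $\alpha>1$ so the angular integral of $|x-y|^{\alpha-d}$ over pairs of unit spheres converges---give
\[
\|u_R\|_{L^\gamma}^\gamma\sim R^{d-1},\quad \|u_R\|_{\dot W^{s,p}}^p\sim R^{d-1},\quad \iint\frac{|u_R(x)|^q|u_R(y)|^q}{|x-y|^{d-\alpha}}\,dx\,dy\sim R^{d+\alpha-2}.
\]
Substituting into \eqref{CSF1} reduces the ratio of the two sides to $R^{a_R}$ with $a_R=(d-1)/\gamma-(d-1)\beta_1-(d+\alpha-2)\beta_2$, and \eqref{thm1-rad-cd1-Opt} is precisely $a_R>0$.

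In case \eqref{thm1-rad-cd2-Opt} the exponent $a_R$ vanishes, so a single bump only gives a ratio of order one; I would instead use the multi-scale superposition
\[
u_N(x)=\sum_{k=1}^N a_k\,\phi\!\left(\frac{|x|-R_k}{\epsilon_k}\right),\qquad R_k=2^{k\lambda},\ \epsilon_k=2^{k\nu},\ a_k=2^{-k(\lambda(d-1)+\nu)/\gamma},
\]
with $\lambda,\nu$ to be tuned. The normalisation of $a_k$ makes every bump contribute the same amount to $\|u_N\|_{L^\gamma}^\gamma$, so this norm is of order $N$. A per-bump computation yields diagonal Sobolev and Coulomb contributions of orders $2^{k\tilde\eta_B}$ and $2^{k\tilde\eta_C}$, with $\tilde\eta_B=\lambda(d-1)(1-p/\gamma)+\nu(1-sp-p/\gamma)$ and $\tilde\eta_C=\lambda\bigl((d+\alpha-2)-2q(d-1)/\gamma\bigr)+2\nu(1-q/\gamma)$. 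The key algebraic identity, obtained by combining \eqref{betaEquations} with the equality in \eqref{thm1-rad-cd2-Opt} (which also implies the one-dimensional scaling $1/\gamma=\beta_1(1-sp)+2\beta_2$), is $\beta_1\tilde\eta_B+\beta_2\tilde\eta_C=0$ for every $(\lambda,\nu)$. The hypothesis $q(1-sp)\neq p$ makes $1-sp-p/\gamma\neq 0$, so one can choose the ratio $\nu/\lambda$ to force $\tilde\eta_B=0$; the identity then forces $\tilde\eta_C=0$ as well. Picking the sign of $\lambda$ so that $\epsilon_k/R_k\to 0$ keeps the bumps geometrically separated. Both diagonal sums are then of order $N$, and the boundary identity yields $1/\gamma-\beta_1-\beta_2=\beta_2(\alpha-1)/(d-1)>0$, so
\[
\frac{\|u_N\|_{L^\gamma}}{\|u_N\|_{\dot W^{s,p}}^{\beta_1 p}\,(\textrm{Coulomb})^{\beta_2}}\sim N^{1/\gamma-\beta_1-\beta_2}\to\infty.
\]

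The main obstacle is the cross-term analysis. For the Gagliardo seminorm the bumps have disjoint geometrically-separated supports, and the standard decay of $|x-y|^{-(d+sp)}$ makes the cross-contribution subdominant. The more delicate step is the Coulomb cross-integral between bumps at widely separated radii $R_k\ll R_l$, whose kernel is of order $R_l^{\alpha-d}$; inserting our normalisation at the tuned $\nu/\lambda$ produces an auxiliary exponent $(d-\alpha)/2>0$ governing the geometric series $\sum_{k<l}2^{(k-l)(d-\alpha)/2}$, which is comparable to $N$ and matches the diagonal order. The borderline configuration $\gamma=p/(1-sp)$ (where the tuned ratio is ill-defined) forms a codimension-one slice of parameters and is handled by a short perturbation argument.
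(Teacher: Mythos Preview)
Your proposal is correct and follows essentially the same route as the paper: a single annular bump $g_{1,R,1}$ for the strict-inequality case, and a geometric superposition of annular bumps with tuned amplitude, radius, and width for the equality case (the paper's parameters $(\xi_1,\xi_2)$ solving the linear system \eqref{xi1xi2} correspond exactly to your tuned ratio $\nu/\lambda$ together with your normalisation of $a_k$), yielding the same $N^{\,1/\gamma-\beta_1-\beta_2}$ blow-up via $1/\gamma-\beta_1-\beta_2=\beta_2(\alpha-1)/(d-1)>0$. Your closing remark about a ``borderline configuration'' $\gamma=p/(1-sp)$ needing a perturbation argument is superfluous: as you yourself observe two sentences earlier, the hypothesis $q(1-sp)\neq p$ combined with the equality in \eqref{thm1-rad-cd2-Opt} already forces $\gamma=\gamma_{\mathrm{rad}}\neq p/(1-sp)$, so that case never arises.
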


We also obtain the following result which is on the optimality of \eqref{ConCondtn} when $0 < \alpha \le 1$. 

\begin{thm}\label{thm1-rad-opt1}
Let $d \ge 2$, $0< s \le 1$, $1<\gamma <+\infty, \,1\le p, \, q < + \infty$, $0<\alpha\le1$, $0 < \beta_1, \, \beta_2 < + \infty$ be such that \eqref{betaEquations} holds and $\cD \neq 0$. Assume that
\begin{equation}\label{thm1-rad-cd1-Opt1}
 \beta_1 \gamma +  \beta_2 \gamma < 1. 
 \end{equation}
Then \eqref{CSF1} {\bf fails} to hold. 
\end{thm}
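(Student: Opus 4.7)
The plan is to disprove \eqref{CSF1} by exhibiting a family of radial test functions concentrating on thin spherical shells of large radius, for which the ratio of the two sides of \eqref{CSF1} diverges. Fix a nontrivial $\varphi\in C^1_c(\R)$ with $\varphi\ge 0$ and $\mathrm{supp}\,\varphi\subset[-1,1]$, and for $R_n\to\infty$, set
\begin{equation*}
g_n(x) := \varphi(|x|-R_n), \qquad x\in\R^d.
\end{equation*}
Each $g_n$ belongs to $C^1_{c,\rm{rad}}(\R^d)$ and is supported in the shell $\{R_n-1\le|x|\le R_n+1\}$. A polar-coordinates computation immediately gives $\|g_n\|_{L^\gamma}\ge c R_n^{(d-1)/\gamma}$. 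For the Sobolev seminorm, the case $s=1$ yields $\|g_n\|_{\dot W^{1,p}}^p\sim R_n^{d-1}$ directly from $|\nabla g_n|=|\varphi'(|x|-R_n)|$; for $0<s<1$, splitting the double integral at $|x-y|=1$ and using the global Lipschitz bound $|g_n(x)-g_n(y)|\le\|\varphi'\|_\infty|x-y|$ on the short-range part (together with $\int_{|z|\le 1}|z|^{p(1-s)-d}dz<\infty$) and the brutal bound $|g_n(x)-g_n(y)|\le 2\|\varphi\|_\infty$ on the long-range part (together with $\int_{|z|\ge 1}|z|^{-d-sp}dz<\infty$ and $\|g_n\|_{L^p}^p\sim R_n^{d-1}$), one concludes $\|g_n\|_{\dot W^{s,p}}^p\le C R_n^{d-1}$ in all cases.

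The key step is the upper bound on the Coulomb term. Writing $x=(R_n+t_1)\omega_1$ and $y=(R_n+t_2)\omega_2$ with $t_i\in[-1,1]$ and $\omega_i\in S^{d-1}$, we have $|x-y|^2=(t_1-t_2)^2+2(R_n+t_1)(R_n+t_2)(1-\omega_1\cdot\omega_2)$. Integrating the relative sphere variable with $\theta$ the angle between $\omega_1,\omega_2$ and rescaling $r=R_n\theta$, the near-diagonal region ($\theta$ small) produces, up to constants,
\begin{equation*}
C R_n^{d-1}\iint_{[-1,1]^2}|\varphi(t_1)|^q|\varphi(t_2)|^q\left(\int_0^{cR_n}\frac{r^{d-2}}{\big((t_1-t_2)^2+r^2\big)^{(d-\alpha)/2}}dr\right)dt_1dt_2.
\end{equation*}
A rescaling $r=|t_1-t_2|s$ shows that the inner integral remains bounded as $R_n\to\infty$ precisely when $\alpha<1$, yielding the kernel $|t_1-t_2|^{\alpha-1}$, which is integrable on $[-1,1]^2$ since $\alpha>0$; for $\alpha=1$ it is $O(\log R_n)$. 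The far-diagonal region (where $|x-y|\sim R_n$) contributes only $O(R_n^{2(d-1)+\alpha-d})=O(R_n^{d-2+\alpha})$, which is of lower order since $\alpha\le 1$. Altogether, for $0<\alpha\le 1$,
\begin{equation*}
\iint\frac{|g_n(x)|^q|g_n(y)|^q}{|x-y|^{d-\alpha}}dxdy\le C R_n^{d-1}(1+\log R_n).
\end{equation*}

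Combining the three estimates, the ratio of the two sides of \eqref{CSF1} satisfies
\begin{equation*}
\frac{\|g_n\|_{L^\gamma(\R^d)}}{\|g_n\|_{\dot W^{s,p}(\R^d)}^{\beta_1 p}\left(\iint\frac{|g_n(x)|^q|g_n(y)|^q}{|x-y|^{d-\alpha}}dxdy\right)^{\beta_2}}\ge c\, R_n^{(d-1)\left(\frac{1}{\gamma}-\beta_1-\beta_2\right)}(1+\log R_n)^{-\beta_2}.
\end{equation*}
Under \eqref{thm1-rad-cd1-Opt1}, the exponent of $R_n$ is strictly positive and dominates the logarithm, so this ratio diverges, contradicting \eqref{CSF1}. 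The main technical difficulty is the Coulomb bound: since the kernel $|x-y|^{\alpha-d}$ is strongly singular for $\alpha\le 1$, a direct Hardy--Littlewood--Sobolev argument yields only the weaker estimate $R_n^{(d-1)(d+\alpha)/d}$, and the sharp $R_n^{d-1}$ growth must be obtained through the careful radial-shell analysis sketched above.
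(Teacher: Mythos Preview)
Your proof is correct and follows essentially the same route as the paper: test \eqref{CSF1} against thin radial shells $g_{1,R,1}(x)=\varphi(|x|-R)$ for large $R$, use the growth bounds $\|g\|_{L^\gamma}^\gamma\gtrsim R^{d-1}$, $\|g\|_{\dot W^{s,p}}^p\lesssim R^{d-1}$, and $\iint|g(x)|^q|g(y)|^q|x-y|^{\alpha-d}\lesssim R^{d-1}(1+\log R)$ for $0<\alpha\le 1$, and derive a contradiction from $\beta_1\gamma+\beta_2\gamma<1$. The only cosmetic difference is that the paper obtains the $\dot W^{s,p}$ bound via the interpolation $\|g\|_{\dot W^{s,p}}\lesssim\|g\|_{\dot W^{1,p}}^{s}\|g\|_{L^p}^{1-s}$ and cites the Coulomb estimate from \cite{MorSch'18}, whereas you supply self-contained arguments for both.
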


\begin{remark} \rm \Cref{thm1-rad-opt1} confirms the optimality of \eqref{ConCondtn} even for radial functions. 
\end{remark}

Various special cases of \Cref{thm1-rad} are known in the literature where the assumptions were written in a quite involved manner and only for the case $p=2$. More precisely, when $p=2$, the conclusion of \Cref{thm1-rad} was proved under the following assumption on $\gamma$ instead of the assumption \eqref{thm1-rad-cd1}-\eqref{thm1-rad-cd2}
\begin{align}\label{gammaRadRangeProof1}
\begin{cases}
\dsp q+ \frac{\bct{q(2s-1)+2}(d-\alpha)}{2s(d+\alpha-2)+(d-\alpha)}< \gamma <\infty, &\text{ if } s \geq \frac{d}{2}, \\[6pt]
\dsp \gamma\in \Bigg(q+ \frac{\bct{q(2s-1)+2}(d-\alpha)}{2s(d+\alpha-2)+(d-\alpha)}, \frac{2d}{d-2s}\Bigg], &\text{ if } s<\frac{d}{2} \text{ and } \frac{1}{q}> \frac{d-2s}{d+\alpha}, \\[6pt] 
\dsp \gamma\in \Bigg[\frac{2d}{d-2s},q+ \frac{\bct{q(2s-1)+2}(d-\alpha)}{2s(d+\alpha-2)+(d-\alpha)} \Bigg), &\text{ if } s<\frac{d}{2} \text{ and } \frac{1}{q}< \frac{d-2s}{d+\alpha} \text{ and } \frac{1}{q}\neq \frac{1-2s}{2},\\[6pt]
\dsp \frac{2d}{d-2s} \leq \gamma \leq q, &\text{ if } s<\frac{1}{2} \text{ and } \frac{1}{q}=\frac{1-2s}{2}. 
\end{cases}
\end{align}
It can be shown, with the help of  \Cref{pro1-thm1-rad-proof} in \Cref{imRanRad}, that  \eqref{thm1-rad-cd1}-\eqref{thm1-rad-cd2} is equivalent to \eqref{gammaRadRangeProof1} when $p=2$. Our results are new even in the case $s=1$ and $p \neq 2$, to our knowledge, and as it is evident from \eqref{gammaRadRangeProof1}, the assumptions given in this paper have a simple form than known ones. Concerning known results, Ruiz \cite{Rui} established the result for the case $s=1$, $d=3$, $\alpha = 2$,  and $p=q=2$. When $d \ge 2$, Mercuri, Moroz, and Van~Schaftingen \cite{MerMorSch} obtained the result in the case $1 < \alpha < d$,  $q \geq 1$, $p=2$, and $s=1$. The result  was later established to the case $1/2 < s < 1$, for $d=3$ with the same ranges of $\alpha$, $q$, and $p$ by Bellazzini, Ghimenti and Ozawa \cite{BelGhiOz}. Finally, Bellazzini, Ghimenti, Mercuri,  Moroz, Van Schaftingen obtained \Cref{thm1-rad} in the case $p=2$. The optimality discussed in \Cref{thm1-rad-opt} and \Cref{thm1-rad-opt1}  is known in the case $p=2$, a result due to Bellazzini, Ghimenti, Mercuri, Moroz, and Van~Schaftingen 
\cite{MorSch'18}.

\medskip 
We next briefly describe the ideas of the proof of \Cref{thm1-rad}. The idea is to derive useful consequences of the radial improvements of the CKN inequalities \Cref{thmMN} and a point wise estimate of radial functions related to Strauss' lemma.  These results are given in \Cref{CKN-for-Rad}.  We then apply these results to some suitable approximation $\gamma_\eps$ of $\gamma$ and then interpolation inequalities are involved. At some point, we also use an estimate of the Coulomb energy due to Ruiz \cite{Rui} (see \Cref{ruiz}).

\medskip 
The paper is organized as follows. \Cref{imRanRad} is devoted to the proof of \Cref{thm1-rad} and consists of two subsections. In the first one, we derive various versions of CKN inequalities for radial functions which will be useful in the proof of \Cref{thm1-rad}. The proof of \Cref{thm1-rad} is given in the second one.  In \Cref{sect-Opt}, we discuss the optimality of the parameters. \Cref{thm1-rad-opt} and \Cref{thm1-rad-opt1} are established there.

 \section{The Coulomb-Sobolev inequality for radial functions}\label{imRanRad}

This section consists of two subsections and is devoted to prove Theorem \ref{thm1-rad}.

\subsection{Preliminaries} \label{CKN-for-Rad}
In this section, we establish various results used in the proof of \Cref{thm1-rad}. Recall that $\cD$ is defined in \eqref{cD}. 
The following first result is a consequence of \Cref{thmMN} and  
is the main ingredient for the proof of \Cref{thm1-rad}.

\begin{lemma}\label{Rad}
Let $d \ge 2$, $0<s\leq1$, $1\leq p,q<\infty$, and $0<\alpha<d$. There exists a constant $C>0$ such that the following two assertions hold. 

\begin{itemize}
\item[(i)] If $\cD >0$, then for all $\eps>0$ small enough such that $\cD - 2\eps p = p(d+\alpha-2\epsilon)-2q (d-sp)>0$, it holds
\begin{align}\label{plsEpss}
\nrm{g}_{L^{\gamma_\eps}(\ro^d)} \leq C \nrm{g}_{\dot{W}^{s,p}(\ro^d)}^{a_\eps} \nrm{|x|^{-(\frac{d-\alpha}{2q}+\frac{\eps}{q})}g}^{1-a_\eps}_{L^q(\ro^d)}, \ \forall g\in C_{c,\rm{rad}}^1(\ro^d),
\end{align}
where $\gamma_\eps$ and $a_\eps$ are defined by 
\begin{align}\label{gammaRadEps1}
\gamma_\eps &= q+\frac{\big(q(sp-1)+p \big)(d-\alpha+2\eps)}{sp(d+\alpha-2\epsilon-2) +(d-\alpha+2\epss)} \mbox{ and }a_\eps = \frac{p\big(\gamma_\eps(d+\alpha-2\epsilon) -2qd \big)}{\gamma_\eps\big(p(d+\alpha-2\epss)-2q(d-sp) \big)}, \end{align}

\item[(ii)] and if $\cD <0$ then for all $\eps>0$ small enough such that $\cD + 2 \eps p = p(d+\alpha+2\epsilon)-2q (d-sp)<0$, it holds
\begin{align}\label{mnsEpss}
\nrm{g}_{L^{\gamma_\eps}(\ro^d)} \leq C \nrm{g}_{\dot{W}^{s,p}(\ro^d)}^{a_\eps} \nrm{|x|^{-(\frac{d-\alpha}{2q}-\frac{\eps}{q})}g}^{1-a_\eps}_{L^q(\ro^d)}, \ \forall g\in C_{c,\rm{rad}}^1(\ro^d),
\end{align}
where $\gamma_\eps$ and $a_\eps$ are defined by 
\begin{align}\label{gammaRadEps2}
\gamma_\eps &= q+\frac{\big(q(sp-1)+p \big)(d-\alpha-2\eps)}{sp(d+\alpha+2\epsilon-2) +(d-\alpha-2\epss)} \mbox{ and }  \ a_\eps = \frac{p \big(\gamma_\eps(d+\alpha+2\epsilon) -2qd \big)}{\gamma_\eps\big(p(d+\alpha+2\epss)-2q(d-sp) \big)}.
\end{align}

\end{itemize}

\end{lemma}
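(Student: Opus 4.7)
The plan is to deduce \Cref{Rad} as the endpoint instance of the radial Caffarelli--Kohn--Nirenberg inequality \Cref{thmMN} in which the \emph{only} spatial weight is placed on the $L^q$ factor. I would apply \Cref{thmMN} with, in its notation, $\alpha_1 = \alpha_2 = 0$ (unweighted Gagliardo seminorm) and $\gamma = 0$ (unweighted left-hand side), $\tau = \gamma_\eps$, interpolation parameter $a = a_\eps$, and
\[
\beta = -\frac{d-\alpha+2\eps}{2q} \text{ in case (i)}, \qquad \beta = -\frac{d-\alpha-2\eps}{2q} \text{ in case (ii)}.
\]
Since $\tilde{\alpha}=0$ and $\gamma = a\sigma+(1-a)\beta = 0$ forces $\sigma = -\frac{1-a}{a}\beta$, one has $\tilde{\alpha}-\sigma = \frac{1-a}{a}\beta$; as $\beta<0$ for $\eps$ small, the upper bound $\tilde{\alpha}-\sigma < 0$ in \eqref{cond-s-1-rad} is automatic as soon as $a_\eps \in (0,1)$.

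The explicit formulas for $\gamma_\eps$ displayed in the lemma correspond precisely to forcing equality in the lower bound of \eqref{cond-s-1-rad}, namely $\tilde{\alpha}-\sigma = -(d-1)s$. In case (i) this reads $\frac{1-a_\eps}{a_\eps} = \frac{2q(d-1)s}{d-\alpha+2\eps}$. Substituting this into the balance law \eqref{balance-law2}, which with our parameter choice simplifies to
\[
\frac{1}{\gamma_\eps} = a_\eps \, \frac{d-sp}{dp} + (1-a_\eps)\, \frac{d+\alpha-2\eps}{2qd},
\]
and clearing denominators yields exactly the formula for $\gamma_\eps$ in \eqref{gammaRadEps1}; the matching expression for $a_\eps$ then drops out of the balance law. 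The hypothesis $p(d+\alpha-2\eps)-2q(d-sp) > 0$ keeps the denominator $\gamma_\eps(\cD-2\eps p)$ of $a_\eps$ positive and forces $a_\eps\in(0,1)$, so \Cref{thmMN} applies. The remaining condition $\frac{1}{\gamma_\eps}>0$ is trivial since $\gamma_\eps \to \gamma > 0$ as $\eps \to 0$.

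Case (ii) is treated identically after swapping $\eps \mapsto -\eps$ throughout the weight and the computations, the hypothesis $p(d+\alpha+2\eps)-2q(d-sp) < 0$ playing the analogous role in guaranteeing $a_\eps \in (0,1)$. The main (and really only) obstacle is the algebraic bookkeeping in translating the endpoint relation $\tilde\alpha - \sigma = -(d-1)s$ together with the balance law into the closed-form expressions for $\gamma_\eps$ and $a_\eps$; I would perform this by clearing denominators and regrouping terms in $sp$ and $(d\pm\alpha\mp 2\eps)$ until the factor $sp(d+\alpha\mp 2\eps - 2) + (d-\alpha\pm 2\eps)$ appears in the denominator. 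No analytic input beyond \Cref{thmMN} is needed.
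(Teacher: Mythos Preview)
Your plan is correct and is exactly the paper's approach: apply \Cref{thmMN} with $\alpha_1=\alpha_2=0$, $\gamma=0$, $\tau=\gamma_\eps$, $a=a_\eps$, and $\beta=-\frac{d-\alpha\pm 2\eps}{2q}$, and verify that the resulting parameters sit precisely at the endpoint $\widetilde\alpha-\sigma=-(d-1)s$ of the radial range \eqref{cond-s-1-rad}. One small remark: your sentence that the hypothesis $\cD-2\eps p>0$ ``forces $a_\eps\in(0,1)$'' undersells how directly this follows in your own setup---since you \emph{define} $a_\eps$ through $\frac{1-a_\eps}{a_\eps}=\frac{2q(d-1)s}{d-\alpha+2\eps}>0$, the relation $a_\eps=\frac{1}{1+c}\in(0,1)$ is immediate; the paper instead starts from the displayed formula for $a_\eps$ and spends a page checking the signs of the numerators of $a_\eps$ and $1-a_\eps$ by comparing $\gamma_\eps$ with $\frac{2qd}{d+\alpha-2\eps}$ and $\frac{dp}{d-sp}$, which is equivalent but longer.
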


\begin{remark} \rm The signs in front of $2 \eps$ of the corresponding terms in \eqref{gammaRadEps1} and \eqref{gammaRadEps2} are opposite. 
\end{remark}

\begin{proof}
We only prove the assertion $(i)$. Assertion $(ii)$ follows similarly. 

We have, by \eqref{gammaRadEps1}, 
\begin{align*}
   \gamma_\eps - \frac{2qd}{d+\alpha-2\eps} &= q+\frac{\big(q(sp-1)+p \big)(d-\alpha+2\eps)}{sp(d+\alpha-2\epsilon-2) +(d-\alpha+2\epss)} 
   - \frac{2qd}{d+\alpha-2\eps} \\
 &= \frac{\big(qsp- q +p \big)(d-\alpha+2\eps)}{sp(d+\alpha-2\epsilon-2) +(d-\alpha+2\epss)} 
   - \frac{q(d - \alpha + 2 \eps)}{d+\alpha-2\eps}. 
   \end{align*}
This implies 
\begin{equation}\label{bta1Pos}
   \gamma_\eps - \frac{2qd}{d+\alpha-2\eps} 
   =\frac{\big(p(d+\alpha-2\eps)-2q(d-sp) \big)(d-\alpha+2\eps)}{(d+\alpha-2\eps)\big(sp(d+\alpha-2\eps-2)+(d-\alpha+2\eps)\big)}>0, 
\end{equation}
by the smallness of $\eps$. 

Using the definition of $a_\eps$ in \eqref{gammaRadEps1}, we derive  from \eqref{bta1Pos} and the smallness of $\eps$ that 
\begin{equation}
a_\eps>0. 
\end{equation}
We have, by \eqref{gammaRadEps1},  
\begin{equation} \label{1-aeps}
1-a_\eps = 1 - \frac{p\big(\gamma_\eps(d+\alpha+2\epsilon) -2qd \big)}{\gamma_\eps\big(p(d+\alpha+2\epss)-2q(d-sp) \big)} = \frac{2q \big(dp-\gamma_\eps(d-sp) \big)}{\gamma_\eps \big(p(d+\alpha-2\eps)-2q(d-sp) \big)},
\end{equation}
which yields 
$$
a_\eps < 1 \mbox{ if } sp\geq d. 
$$
We next deal with the case $sp < d$. From \eqref{gammaRadEps1}, we derive that 
\begin{equation}
\gamma_{\eps}= \frac{2qsp(d-1)+p(d-\alpha+2\eps)}{sp(d+\alpha-2\eps-2)+ (d-\alpha+2\eps)}. \label{AC2}
\end{equation}
Using \eqref{AC2}, we obtain  
\begin{align*}
\frac{dp}{d-sp}- \gamma_\eps &= \frac{dsp^2(d+\alpha-2\eps-2)+dp(d-\alpha+2\eps)}{(d-sp)\big(sp(d+\alpha-2\eps-2)+ (d-\alpha+2\eps)\big)} \\
& \quad \quad \quad \quad  - \frac{2qsp(d-1)(d-sp) + p(d-sp)(d-\alpha+2\eps)}{(d-sp)\big(sp(d+\alpha-2\eps-2)+ (d-\alpha+2\eps) \big)}
\\ &= \frac{dsp^2(d+\alpha-2\eps-2)+sp^2(d-\alpha+2\eps)-2qsp(d-1)(d-sp)}{(d-sp)\big(sp(d+\alpha-2\eps-2)+ (d-\alpha+2\eps) \big)}\\&= \frac{sp^2(d-1)(d+\alpha-2\eps-2)+2sp^2(d-1)-2qsp(d-1)(d-sp)}{(d-sp)\big(sp(d+\alpha-2\eps-2)+ (d-\alpha+2\eps) \big)}\\&= \frac{sp(d-1)\bct{p(d+\alpha-2\eps-2)+2p-2q(d-sp)}}{(d-sp)(sp(d+\alpha-2\eps-2)+ (d-\alpha+2\eps))}, 
\end{align*}
which yields, for $sp < d$,
\begin{align}\label{bta2Pos}
\frac{dp}{d-sp} -\gamma_\eps = \frac{sp(d-1)\big(p(d+\alpha-2\eps) -2q(d-sp) \big)}{(d-sp) \big(sp(d+\alpha-2\eps-2)+(d-\alpha+2\eps)\big)}>0, 
\end{align}
by the smallness of $\eps$,    which implies, by \eqref{gammaRadEps1}, that 
$$
a_\eps < 1. 
$$   
We thus established  
$$
0<a_\eps<1.
$$

In light of \eqref{CKN-fractional1} under the assumptions \eqref{balance-law2} and \eqref{cond-s-1-rad}, it suffices to verify these conditions for the following parameters $(s, p, q, \tau, \gamma, a, \alpha_1, \alpha_2, \beta) = (s, p, q, \gamma_\eps, 0, a_\eps, 0, 0, - \frac{d-\alpha + 2 \eps}{2q})$. 
One can check that 
\begin{equation} \label{Rad-alpha}
\widetilde{\alpha}=\alpha_1+\alpha_2=0
\end{equation}
and 
\begin{equation} \label{Rad-sigma}
\sigma = \frac{1}{a} \big(\gamma  - (1-a) \beta \big) = 
\frac{1-a_\eps}{a_\eps}\frac{d-\alpha+2\eps}{2q}.
\end{equation}
From \eqref{gammaRadEps1} and \eqref{1-aeps}, we deduce that
\begin{align*}
&\frac{d-sp}{d} \frac{\gamma_\eps a_\eps}{p}+ \frac{d+\alpha-2\eps}{d} \frac{\gamma_\eps(1-a_\eps)}{2q}\\ &= \frac{(d-sp)\big(\gamma_\eps(d+\alpha-2\epsilon) -2qd \big) + (d+\alpha-2\eps) \big(dp-\gamma_\eps(d-sp) \big)}{d\big(p(d+\alpha-2\eps)-2q(d-sp)\big)}=1. 
\end{align*}
This implies 
\begin{equation*}
\frac{1}{\gamma_\eps}=  \frac{a_\eps (d-sp) }{d p} + \frac{(1-a_\eps)(d+\alpha-2\eps)}{2 d q} \notag
\end{equation*}
and thus \eqref{balance-law2} holds. 

We have
\begin{equation}\label{alphaMinusSigma}
\widetilde{\alpha}-\sigma \mathop{=}^{\eqref{Rad-alpha}, \eqref{Rad-sigma}} - \frac{1-a_\eps}{a_\eps} \frac{d-\alpha+2\eps}{2q} \mathop{=}^{\eqref{gammaRadEps1}, \eqref{1-aeps}} -\frac{\big(dp-\gamma_\eps(d-sp) \big)(d-\alpha+2\eps)}{p\big(\gamma_\eps(d+\alpha-2\epsilon) -2qd \big)}.
\end{equation}
Plugging \eqref{bta1Pos} and \eqref{bta2Pos} in \eqref{alphaMinusSigma}, we get 
$$
\widetilde{\alpha}-\sigma= -(d-1)s.
$$ 
Therefore, \eqref{cond-s-1-rad} is satisfied. 

\medskip 
Thus, the assumptions \eqref{balance-law2} and \eqref{cond-s-1-rad} are checked. The proof is complete.
\end{proof}

Here is another consequence of the CKN inequalities \eqref{CKN-fractional1} for radial functions, which is used in the proof of \Cref{thm1-rad}. 

\begin{lemma}\label{stWss}
Let $d\geq 2$, $1\leq p<\infty$ , $0<s\leq1$ be such that $sp< d$, and $\beta \in \R$.  Assume that 
\begin{itemize}
\item [(a)] $\frac{1}{r} = \frac{1}{p} + \frac{\beta-s}{d}, $
\item[(b)] $- (d-1) s\leq \beta \leq s.$ 
\end{itemize}
There exists a positive constant $C$ such that 
\begin{align}\label{stWssIneq}
\left\| |x|^{-\beta}g \right\|_{L^r(\ro^d)}\leq C \| g \|_{\dot{W}^{s,p}(\ro^d)}, \mbox{ for all } g \in C^1_{c, \rm{rad}}(\R^d).
\end{align}

\end{lemma}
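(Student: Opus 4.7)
The plan is to recognize \Cref{stWss} as the special case $a = 1$ of a CKN inequality with no weights on the seminorm side, and to apply either the general inequality \eqref{CKN-fractional1-*} of Nguyen--Squassina or the radial improvement \Cref{thmMN} depending on the sign of $\beta$. Specifically, in the notation of \eqref{balance-law2}--\eqref{CKN-fractional1-*}, I will choose
\[
a = 1, \quad \tau = r, \quad \gamma = -\beta, \quad \alpha_1 = \alpha_2 = 0 \ (\text{so }\widetilde\alpha = 0), \quad \sigma = -\beta,
\]
together with the given $s$ and $p$ (and any $q \ge 1$, $\beta_{\rm CKN} \in \R$, whose contribution disappears since $1-a = 0$). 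With these choices the right-hand side of \eqref{CKN-fractional1-*} or \eqref{CKN-fractional1} reduces to $C\,\|g\|_{\dot W^{s,p}(\R^d)}$ and the left-hand side to $\||x|^{-\beta} g\|_{L^r(\R^d)}$, so it remains to verify the required hypotheses.

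First I would check the scaling identity \eqref{balance-law2}: with our choices it becomes
\[
\frac{1}{r} + \frac{-\beta}{d} \;=\; \frac{1}{p} + \frac{0-s}{d},
\]
which is exactly assumption (a). Next, $\widetilde\alpha - \sigma = \beta$, and the positivity condition $\frac{1}{\tau} + \frac{\gamma}{d} > 0$ reduces to $\frac{1}{p} - \frac{s}{d} > 0$, which holds because of the standing hypothesis $sp < d$.

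For the sign-sensitive hypotheses I would split into two cases. When $0 \le \beta \le s$, I apply the non-radial inequality \eqref{CKN-fractional1-*}: condition \eqref{cond-s-1} becomes $0 \le \beta$, which holds, and since $a = 1$ forces the equality $\frac{1}{\tau}+\frac{\gamma}{d} = \frac{1}{p}+\frac{\widetilde\alpha-s}{d}$, condition \eqref{cond-s-2} becomes $\beta \le s$, which is precisely the upper half of assumption (b). When $-(d-1)s \le \beta < 0$, the sign of $\widetilde\alpha - \sigma$ is negative, so I instead invoke the radial refinement \Cref{thmMN}: the condition \eqref{cond-s-1-rad} reads $-(d-1)s \le \beta < 0$, which is the lower half of (b) in this case. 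In either case the conclusion of the applied CKN inequality is exactly \eqref{stWssIneq}, for radial $g \in C^1_{c,\mathrm{rad}}(\R^d)$.

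No serious obstacle is expected here: the proof is purely a bookkeeping verification that the parameters of \Cref{stWss} match the allowed range of one of the already-established CKN inequalities. The only delicate point worth stating explicitly is the dichotomy $\beta \ge 0$ versus $\beta < 0$, which forces the use of the radial improvement exactly when the weight $|x|^{-\beta}$ is singular at the origin and therefore cannot be handled by the non-radial theorem.
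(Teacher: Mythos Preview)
Your proposal is correct and follows essentially the same route as the paper: both recognize \eqref{stWssIneq} as the $a=1$ case of a CKN inequality with parameters $(\tau,\gamma,\alpha_1,\alpha_2)=(r,-\beta,0,0)$, verify \eqref{balance-law2} from assumption (a), note $\frac{1}{\tau}+\frac{\gamma}{d}=\frac{d-sp}{dp}>0$, and reduce the remaining hypotheses to $\widetilde\alpha-\sigma=\beta$ lying in the interval allowed by (b). The only cosmetic difference is that you make the dichotomy explicit---invoking the non-radial inequality \eqref{CKN-fractional1-*} with \eqref{cond-s-1}--\eqref{cond-s-2} when $0\le\beta\le s$ and the radial refinement \Cref{thmMN} with \eqref{cond-s-1-rad} when $-(d-1)s\le\beta<0$---whereas the paper cites \eqref{CKN-fractional1} together with the pair \eqref{cond-s-1-rad}, \eqref{cond-s-2} in a single stroke; your version is arguably cleaner.
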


\begin{proof}
First of all notice that, since $\beta \leq s$ we must have $r\geq p$.
In light of \eqref{CKN-fractional1} under the assumptions \eqref{balance-law2}, \eqref{cond-s-1-rad}, and \eqref{cond-s-2}, it suffices  to verify these conditions for  the following parameters $(a, s, p, \tau, \gamma, \alpha_1, \alpha_2) = (1, s, p, r, - \beta, 0, 0)$.  
We compute $\widetilde{\alpha}=\alpha_1+\alpha_2=0$ and $\sigma= -\beta$. Note that by the assumption $(a)$ we have $\frac{1}{\tau}+\frac{\gamma}{d}=\frac{d-sp}{dp} >0$. So, \eqref{balance-law2} is satisfied. On the other hand, \eqref{cond-s-1-rad} and \eqref{cond-s-2}  are consequences of the assumptions $(b)$. We thus have the desired inequality.
\end{proof}

\begin{remark} \rm Particular versions of \eqref{stWssIneq} can be found in \cite{Rubin} (see also of \cite[Theorem 4.3]{MorSch'18} and the discussion afterwards for more references).
\end{remark}

As a consequence of \eqref{stWssIneq}, we obtain the following useful estimate.

\begin{lemma}\label{wkNi}
Let $d\geq 2$, $1\leq p<\infty$, and $0<s\leq1$ be such that $sp\leq 1$. Assume that   $\frac{1}{p}-s \leq \frac{1}{\gamma} \leq \frac{1}{p}-\frac{s}{d}$. Then there exists a positive constant $C$ such that for any $R>0$, we have 
\begin{align}\label{wkNiIneq}
\bct{\int_{|x|>R} |g(x)|^\gamma dx}^\frac{1}{\gamma} \leq CR^{\frac{d}{\gamma}-(\frac{d}{p}-s)} \| g \|_{\dot{W}^{s,p}(\ro^d)}, \mbox{ for all } g\in C_{c,\rm{rad}}^1(\ro^d).
\end{align}
\end{lemma}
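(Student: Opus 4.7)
The plan is to deduce the estimate directly from \Cref{stWss} by choosing $\beta$ so that the exponent on the left-hand side equals $\gamma$, and then to extract the power of $R$ from the weight $|x|^{-\beta}$ on the set $\{|x|>R\}$. No H\"older argument should be required with this choice.

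Concretely, I would set
\begin{equation*}
\beta := s - \frac{d}{p} + \frac{d}{\gamma}
\end{equation*}
and verify that this $\beta$ satisfies the hypotheses of \Cref{stWss}. The assumption $\frac{1}{\gamma}\le \frac{1}{p}-\frac{s}{d}$ gives $\beta\le 0\le s$, and the assumption $\frac{1}{\gamma}\ge \frac{1}{p}-s$ gives $\beta\ge -(d-1)s$, so condition~(b) of \Cref{stWss} holds. With this choice, the companion exponent $r$ in \Cref{stWss} satisfies $\frac{1}{r}=\frac{1}{p}+\frac{\beta-s}{d}=\frac{1}{\gamma}$, hence $r=\gamma$. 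Therefore \Cref{stWss} yields
\begin{equation*}
\left\| |x|^{-\beta}g\right\|_{L^\gamma(\R^d)}\le C\,\|g\|_{\dot W^{s,p}(\R^d)}\quad\text{for every }g\in C^1_{c,\mathrm{rad}}(\R^d).
\end{equation*}

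Since $-\beta\ge 0$, on the region $\{|x|>R\}$ one has the pointwise bound $|x|^{-\beta}\ge R^{-\beta}$, so that
\begin{equation*}
R^{-\beta}\left(\int_{|x|>R}|g(x)|^\gamma\,dx\right)^{1/\gamma}\le \left\||x|^{-\beta}g\right\|_{L^\gamma(\R^d)}\le C\,\|g\|_{\dot W^{s,p}(\R^d)}.
\end{equation*}
Rearranging and recalling $\beta=\frac{d}{\gamma}-\bigl(\frac{d}{p}-s\bigr)$ yields exactly \eqref{wkNiIneq}. The hypothesis $sp\le 1$ is used only implicitly, through the assumption $\frac{1}{p}-s\le\frac{1}{\gamma}$, which forces $\frac{1}{p}-s\ge 0$ in order to be a nonvacuous constraint on $\gamma\in(1,\infty)$, and it also guarantees $sp<d$ as required for applying \Cref{stWss}. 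I do not anticipate a serious obstacle here: once the correct $\beta$ is identified, the proof is essentially a one-line consequence of the weighted embedding combined with the monotonicity of $|x|^{-\beta}$ on the exterior region.
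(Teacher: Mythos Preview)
Your proposal is correct and follows exactly the same approach as the paper's proof: apply \Cref{stWss} with $r=\gamma$ and $\beta=\frac{d}{\gamma}-\bigl(\frac{d}{p}-s\bigr)$, note that $\beta\le 0$, and extract the factor $R^{\beta}$ from the weight on $\{|x|>R\}$. The paper's proof is stated in one sentence; you have simply supplied the verification of hypotheses~(a) and~(b) of \Cref{stWss} in detail.
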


\begin{proof} Inequality \eqref{wkNiIneq} is a consequence of \eqref{stWssIneq} with $r = \gamma$ and $\beta = \frac{d}{\gamma} - \left( \frac{d}{p} - s \right)$ after noting that $\beta \le 0$ by the assumptions. 
\end{proof}

We next present a point wise estimate of radial functions related to Strauss' lemma. 

\begin{lemma}\label{StLem}
Let $0<s\le1$, $1\leq p<\infty$ and $d'> 0$ be such that $1<sp<d'$ and assume that $\Lambda>1$. Then there exist a constant $C=C(d',s,p,\Lambda)>0$ such that if $0<s<1$,  then for any $g\in W^{s,p}(\ro)$ it holds
\begin{align}\label{StLem1}
|g(x)| \leq \frac{C}{|x|^\frac{d'-sp}{p}} \bct{\int_{\ro} \int_{\ro} \frac{|g(r)-g(\rho)|^p|r|^{d'-1}}{|r-\rho|^{1+sp}} \chi_{\Lambda}(|r|,|\rho|)dr d\rho}^\frac{1}{p}, \forall x\neq 0,
\end{align}
where, for $r_1, r_2  \ge 0$,  we denote
\begin{equation}\label{def-chi}
\chi_{\Lambda} (r_1, r_2) = \left\{  \begin{array}{cl} 1  &  \mbox{ for  } \Lambda^{-1} r_1 \le r_2 \le \Lambda r_1, \\[6pt]
0 &  \mbox{ otherwise},
\end{array}\right. 
\end{equation}
and if $s=1$, then for any $g\in W^{1,p}(\ro)$ it holds
 \begin{align}\label{StLem1s=1}
 |g(x)| \leq \frac{C}{|x|^\frac{d'-p}{p}} \bct{\int_{\ro} \md{g'(r)}^p |r|^{d'-1} dr  }^\frac{1}{p}, \ \forall x\neq 0.
 \end{align}
\end{lemma}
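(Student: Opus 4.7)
The plan is to treat the cases $s=1$ and $0<s<1$ separately. In either case, by the symmetry of the right-hand sides of \eqref{StLem1} and \eqref{StLem1s=1} under $r\mapsto -r$, I may assume WLOG that $x>0$ (otherwise apply the argument below to $g(-\cdot)$), and by density I may assume that $g$ is compactly supported, so in particular $g(r)\to 0$ as $r\to+\infty$.

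When $s=1$, the fundamental theorem of calculus gives $g(x)=-\int_x^\infty g'(r)\,dr$, and H\"older's inequality with exponents $p$ and $p'=p/(p-1)$ (note $p>1$ because $sp>1$), inserting the weight $r^{(d'-1)/p}$, yields
\[
|g(x)|\le \Bigl(\int_x^\infty |g'(r)|^p r^{d'-1}\,dr\Bigr)^{1/p}\Bigl(\int_x^\infty r^{-(d'-1)/(p-1)}\,dr\Bigr)^{(p-1)/p}.
\]
The tail integral on the right converges because $d'>p=sp$ and equals a constant multiple of $x^{-(d'-p)/p}$, immediately giving \eqref{StLem1s=1}.

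For $0<s<1$ I use a telescoping argument at scale $\Lambda$. Setting $x_k:=\Lambda^k x$ and $I_k:=[x_k,x_{k+1}]$ for $k\ge 0$, one has $g(x_k)=0$ for $k$ large, so
\[
g(x)=\sum_{k\ge 0}\bigl(g(x_k)-g(x_{k+1})\bigr).
\]
The key analytical input is the one-dimensional fractional Morrey embedding (valid since $sp>1$; see, e.g., Di~Nezza--Palatucci--Valdinoci), which for any interval $[a,b]$ yields
\[
|g(a)-g(b)|^p\le C\,(b-a)^{sp-1}\int_a^b\!\!\int_a^b\frac{|g(r)-g(\rho)|^p}{|r-\rho|^{1+sp}}\,dr\,d\rho,
\]
with $C$ independent of $[a,b]$. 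On $I_k\times I_k$ one has $r/\rho\in[\Lambda^{-1},\Lambda]$, so $\chi_\Lambda(r,\rho)=1$, and $r^{d'-1}\ge x_k^{d'-1}$; therefore, defining
\[
E_k:=\int_{I_k}\!\!\int_{I_k}\frac{|g(r)-g(\rho)|^p r^{d'-1}}{|r-\rho|^{1+sp}}\,\chi_\Lambda(r,\rho)\,dr\,d\rho,
\]
the Morrey bound combined with $x_{k+1}-x_k=(\Lambda-1)\Lambda^k x$ produces
\[
|g(x_k)-g(x_{k+1})|^p\le C_\Lambda\,x^{sp-d'}\,\Lambda^{-k(d'-sp)}\,E_k.
\]

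Taking $p$-th roots, summing in $k$, and applying the discrete H\"older inequality with exponents $p$ and $p'$ to $\sum_k\Lambda^{-k(d'-sp)/p}E_k^{1/p}$, the $\ell^{p'}$-factor $\bigl(\sum_k\Lambda^{-k(d'-sp)p'/p}\bigr)^{1/p'}$ is a convergent geometric series because $d'>sp$, while $\bigl(\sum_k E_k\bigr)^{1/p}$ is controlled by the $p$-th root of the full double integral appearing in \eqref{StLem1}, since the rectangles $I_k\times I_k$ are essentially disjoint in $\R^2$. Altogether this yields $|g(x)|\le C_\Lambda\,|x|^{-(d'-sp)/p}\,E^{1/p}$, which is exactly \eqref{StLem1}. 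The principal technical obstacle is the one-dimensional fractional Morrey inequality; once it is in hand, the remainder is a bookkeeping matter, the crucial point being that the scaling $x_k=\Lambda^k x$ matches the Morrey factor $(b-a)^{sp-1}$ with the weight $r^{d'-1}$ precisely so as to produce a convergent geometric series in $k$.
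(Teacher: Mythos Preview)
Your proof is correct and follows essentially the same strategy as the paper for $0<s<1$: telescope along a geometric sequence, apply the one-dimensional fractional Morrey embedding on each piece, then sum the resulting geometric series using $sp<d'$. Two minor differences are worth pointing out. First, you telescope directly at scale $\Lambda$ with $I_k=[\Lambda^k x,\Lambda^{k+1}x]$, which makes the constraint $\chi_\Lambda\equiv 1$ on $I_k\times I_k$ automatic for every $\Lambda>1$; the paper instead reduces to $x=1$, telescopes at dyadic scale $2$, and applies Morrey on intervals of the form $[2^{k-2},2^{k+1}]$, which as written requires $\Lambda$ large enough for $\chi_\Lambda$ to be $1$ there (a harmless issue, but your version sidesteps it cleanly). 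Second, you sum the pieces via a discrete H\"older inequality, while the paper normalizes the full double integral to $1$, bounds each local piece by $1$, and sums the remaining geometric prefactors; both achieve the same bound. For $s=1$ you give a different and more direct argument (fundamental theorem of calculus plus H\"older with weight $r^{(d'-1)/p}$), whereas the paper indicates that the telescoping argument carries over; your route here is shorter.
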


\begin{proof}
First we consider $0<s<1$. Since $1<sp$, $g$ is H\"older continuous by \cite[Theorem 8.1]{NPV} . Without loss of generality, we can assume that $g$ has compact support and $\mathrm{supp}\ g \subset (-2^m,2^m)$ for some $m\in \mathbb{N}$. Since, \eqref{StLem1} is invariant under scaler multiplication, so  we could assume that 
\begin{align}
 \bct{\int_{\ro} \int_{\ro} \frac{|g(r)-g(\rho)|^p|r|^{d'-1}}{|r-\rho|^{1+sp}} \chi_{\Lambda}(|r|,|\rho|)dr d\rho}^\frac{1}{p}=1  \label{StLem1A}. 
\end{align}

Using a scaling argument, it suffices to prove \eqref{StLem1} for $x=1$. 

By the embedding for the fractional Sobolev spaces, see e.g., \cite[estimates 8.4 and 8.8]{NPV},  we have 
\begin{align}\label{StLem2}
|g(1)-g(2)| &\leq C \bct{\int_{ \frac{1}{2} \le |r| \le 4} \int_{\frac{1}{2} \le |\rho| \le 4} \frac{|g(r)-g(\rho)|^p }{|r-\rho|^{1+sp}} \chi_{\Lambda}(|r|,|\rho|) dr d\rho}^\frac{1}{p} \notag \\ 
& \leq C\bct{\int_{\frac{1}{2}<|r|< 4}\int_{\frac{1}{2}<|\rho|< 4}\frac{|g(r)-g(\rho)|^p|r|^{d'-1}}{|r-\rho|^{1+sp}} \chi_{\Lambda}(|r|,|\rho|) dr d\rho}^\frac{1}{p}. 
\end{align}
Here and in what follows, $C>0$ is a constant depends on $s, d', p$ and $\Lambda$. Let $\lambda>0$ and denote $g_\lambda(x):= g(\lambda x)$. Applying \eqref{StLem2} to $g_\lambda$, we obtain 
\begin{align}\label{StLem3}
\md{g(\lambda)-g(2\lambda)} \leq \frac{C}{\lambda^\frac{d'-sp}{p}} \bct{\int_{\frac{\lambda}{2}<|r|< 4 \lambda}\int_{\frac{\lambda}{2}<|\rho|< 4 \lambda}\frac{|g(r)-g(\rho)|^p|r|^{d'-1}}{|r-\rho|^{1+sp}} \chi_{\Lambda}(|r|,|\rho|) dr d\rho}^\frac{1}{p}.
\end{align}
Using the fact \begin{align*}
|g(1)| = |g(1)-g(2^m)| \leq \sum_{k=1}^m \md{g(2^{k-1}) - g(2^k)}, 
\end{align*}
we derive from \eqref{StLem3} that
\begin{multline}\label{StLem4}
|g(1)| \leq C \sum_{k=1}^m \bct{\frac{1}{2^\frac{k-1}{p}}}^{d'-sp} \\[6pt]
\times  \bct{\int_{2^{k-2} \le |r| \le 2^{k+1}}\int_{2^{k-2} \le |\rho| \le 2^{k+1}}\frac{|g(r)-g(\rho)|^p|r|^{d'-1}}{|r-\rho|^{1+sp}} \chi_{\Lambda}(|r|,|\rho|) dr d\rho}^\frac{1}{p}.
\end{multline}
Since $sp<d'$, so $\sum_{k\geq 1} 2^{-\frac{(k-1)(d'-sp)}{p}}<\infty$,  using  \eqref{StLem1A}  in \eqref{StLem4} we establish \eqref{StLem1}. This completes the proof in the case $0<s<1$. The proof in the case $s=1$ follows similarly. The details are omitted. 
\end{proof}

As a consequence of \Cref{StLem}, we obtain the following result. 

\begin{cor}\label{StLemFrc}
Let $0<s\leq1$, $1\leq p<\infty$ and $d\geq 2$ be such that $1<sp<d$. Then there exists a constant $C>0$ depending on $s,p$ and $d$, such that for any  $g\in C_{c,\rm{rad}}^1(\ro^d)$ we have 

\begin{align}\label{StLemIneq}
|g(x)| \leq \frac{C}{|x|^\frac{d-sp}{p}}\| g \|_{\dot{W}^{s,p}(\ro^d)} ,\ \ \forall x\in \ro^d\setminus \Sb{0}.
\end{align}
\end{cor}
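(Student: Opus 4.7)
The plan is to deduce Corollary \ref{StLemFrc} from the one-dimensional Strauss-type inequality of Lemma \ref{StLem} applied with $d' = d$. For $g \in C^1_{c,\mathrm{rad}}(\mathbb{R}^d)$, I would write $g(x) = G(|x|)$ and extend $G$ to $\mathbb{R}$ as an even function $G(-r) := G(r)$. Since a radial $C^1$ function on $\mathbb{R}^d$ forces $G'(0)=0$, this extension is $C^1$ of compact support on $\mathbb{R}$, hence in $W^{s,p}(\mathbb{R})$. Because $1 < sp < d$, fixing some $\Lambda > 1$ sufficiently close to $1$ and invoking Lemma \ref{StLem} with $d' = d$ yields, for $x \neq 0$,
\[
|g(x)| = |G(|x|)| \leq \frac{C}{|x|^{(d-sp)/p}}\, I_\Lambda^{1/p},
\]
where $I_\Lambda$ denotes the weighted double integral on the right of \eqref{StLem1} (respectively \eqref{StLem1s=1} when $s=1$). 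The task is therefore reduced to proving $I_\Lambda \leq C \|g\|^p_{\dot W^{s,p}(\mathbb{R}^d)}$.

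To estimate $I_\Lambda$, I would split the $\mathbb{R} \times \mathbb{R}$ integral into the four sign-quadrants of $(r,\rho)$. By the even symmetry of $G$, the $(-,-)$ piece coincides with the $(+,+)$ piece after $(r,\rho)\mapsto(-r,-\rho)$, while on the mixed-sign quadrants the substitution $\rho \mapsto -\rho$ transforms $|r-\rho|^{1+sp}$ into $(r+\rho)^{1+sp} \geq |r-\rho|^{1+sp}$, so those pieces are dominated by the $(+,+)$ piece. It is thus enough to bound
\[
J := \int_0^\infty\!\!\int_0^\infty \frac{|G(r)-G(\rho)|^p\, r^{d-1}}{|r-\rho|^{1+sp}}\, \chi_\Lambda(r,\rho)\, dr\, d\rho
\]
by $C\|g\|^p_{\dot W^{s,p}(\mathbb{R}^d)}$. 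Passing to spherical coordinates $x=r\omega$, $y=\rho\sigma$ in the seminorm and using $|x-y|^2 = (r-\rho)^2 + 2r\rho(1-\omega\cdot\sigma)$, I would restrict the angular integration to the cap $\{\sigma : 1-\omega\cdot\sigma \leq c(r-\rho)^2/(r\rho)\}$. For $\Lambda$ close to $1$ the quantity $|r-\rho|/\sqrt{r\rho}$ is bounded on $\mathrm{supp}(\chi_\Lambda)$, so this cap has $S^{d-1}$-measure of order $(|r-\rho|/\sqrt{r\rho})^{d-1}$ and on it $|x-y| \sim |r-\rho|$. This produces
\[
\int_{S^{d-1}}\int_{S^{d-1}} \frac{d\omega\, d\sigma}{|x-y|^{d+sp}} \geq \frac{c}{(r\rho)^{(d-1)/2}\, |r-\rho|^{1+sp}},
\]
and multiplying by the volume factor $r^{d-1}\rho^{d-1}$ while using $r\sim\rho$ on $\mathrm{supp}(\chi_\Lambda)$ reduces the weight to $r^{d-1}$, giving the desired bound. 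When $s=1$ the argument is even shorter: since $|\nabla g(x)| = |G'(|x|)|$ for radial $g$, one has $\int_0^\infty |G'(r)|^p r^{d-1} dr = |S^{d-1}|^{-1}\|g\|^p_{\dot W^{1,p}(\mathbb{R}^d)}$, and \eqref{StLem1s=1} applied to the even extension immediately gives \eqref{StLemIneq}.

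The main obstacle I foresee is the geometric angular lower bound in the second step: I must verify precisely that the spherical cap on which $|x-y|$ is comparable to $|r-\rho|$ has $S^{d-1}$-measure of order $(|r-\rho|/\sqrt{r\rho})^{d-1}$, since it is exactly this factor of $(r\rho)^{(d-1)/2}$ in the denominator that converts the volume weight $r^{d-1}\rho^{d-1}$ into the one-sided weight $r^{d-1}$ on $\mathrm{supp}(\chi_\Lambda)$. Once this cap estimate is in place, the remainder is a routine change of variables and an application of Lemma \ref{StLem}.
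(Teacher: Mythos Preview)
Your proposal is correct and follows the same overall route as the paper: both deduce the corollary from Lemma~\ref{StLem} with $d'=d$ together with an estimate bounding the weighted one-dimensional double integral $I_\Lambda$ by $\|g\|_{\dot W^{s,p}(\mathbb{R}^d)}^p$. The only difference is that the paper invokes this second estimate as a black box (\cite[Lemma~3.1]{HmnMalCRAS}), whereas you supply a direct geometric proof via the spherical-cap lower bound on the angular integral; your cap argument is standard and the exponent bookkeeping you outline is accurate.
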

\begin{proof}
\Cref{StLemFrc} is a direct consequence of \Cref{StLem} and  \cite[Lemma 3.1]{HmnMalCRAS}. 
\end{proof}

\begin{remark} \rm
 The inequality \eqref{StLemIneq} is an extension of the radial lemma of Strauss \cite{Strss} to the fractional Sobolev spaces. This type of inequalities has been derived previously in \cite[Theorem 3.1]{Nap} and \cite{ChOz} for $p=2$ and the known proofs are limited to the case $p=2$.
\end{remark}

\subsection{Proof of \Cref{thm1-rad}}
We start by proving the following proposition which allow us to divide the proof of \Cref{thm1-rad} into three different sub cases. 
 
\begin{proposition}\label{pro1-thm1-rad-proof}
Let $d \ge 2$, $0< s \le 1$, $1 \leq \gamma, \, p, \, q < + \infty$, $1 <\alpha<d$, and $0 \le \beta_1, \, \beta_2 < + \infty$, and assume \eqref{betaEquations} and $(d+\alpha)p-2q(d-sp)\neq 0$. 
Set 
\begin{equation}\label{gamaRad}
\gamma_{\mathrm{rad}} := q+ \frac{\bct{q(sp-1)+p}(d-\alpha)}{sp(d+\alpha-2)+(d-\alpha)} =  \frac{2qsp(d-1)+p(d-\alpha)}{sp(d+\alpha-2)+(d-\alpha)}.
\end{equation}
Then \eqref{thm1-rad-cd1} is equivalent to the following:  
\begin{align}\label{gammaRadRangeProof}
\begin{cases}
\dsp \gamma_{\mathrm{rad}} < \gamma \leq \frac{pd}{d-sp}, &\text{ if } \cD > 0 \mbox{ and } sp < d, \\[6pt] 
\dsp \gamma_{\mathrm{rad}}< \gamma <\infty, &\text{ if }  sp \geq d, \\[6pt]
\dsp \frac{pd}{d-sp} \leq \gamma < \gamma_{\mathrm{rad}}, &\text{ if } \cD< 0,
\end{cases}
\end{align}
and \eqref{thm1-rad-cd2} is equivalent to the fact 
\begin{equation}\label{gammaRadRangeProof-2}
\gamma = \gamma_{\rm{rad}} \quad \mbox{ and } \quad q = \frac{p}{1-sp}. 
\end{equation}
\end{proposition}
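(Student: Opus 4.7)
\medskip
\noindent\textbf{Plan of proof.}
The plan is to substitute the explicit formulas \eqref{betaValue} for $\beta_1,\beta_2$ into the quantity
$$T:=\beta_1\gamma+\frac{d+\alpha-2}{d-1}\beta_2\gamma$$
and reduce the conditions \eqref{thm1-rad-cd1}, \eqref{thm1-rad-cd2} to a sign condition involving $\gamma-\gamma_{\rm rad}$ and $\mathcal{D}$. Multiplying through by the common denominator $(d-1)\mathcal{D}$, I expect to get
$$(d-1)\mathcal{D}\,T=(d-1)\bigl[\gamma(d+\alpha)-2qd\bigr]+(d+\alpha-2)\bigl[pd-\gamma(d-sp)\bigr].$$
A direct expansion, collecting the coefficient of $\gamma$, should yield
$$(d-1)(d+\alpha)-(d+\alpha-2)(d-sp)=sp(d+\alpha-2)+(d-\alpha)=:A,$$
so that $(d-1)\mathcal{D}(T-1)=A(\gamma-\gamma_{\rm rad})$ after recognizing that $A\cdot \gamma_{\rm rad}=2qsp(d-1)+p(d-\alpha)$ by \eqref{gamaRad}. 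Since $d\ge 2$, $1<\alpha<d$ and $sp>0$, one has $A>0$, hence \eqref{thm1-rad-cd1} is equivalent to $(\gamma-\gamma_{\rm rad})/\mathcal{D}>0$ and the equality case in \eqref{thm1-rad-cd2} is equivalent to $\gamma=\gamma_{\rm rad}$.

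Next I handle the constraints coming from $\beta_1,\beta_2\ge 0$, which are built into the hypotheses of the proposition. Using \eqref{betaValue} again, $\beta_1\ge 0$ is equivalent to $\gamma(d+\alpha)\ge 2qd$ or $\le 2qd$ according to the sign of $\mathcal{D}$, and similarly $\beta_2\ge 0$ imposes $\gamma\lessgtr \frac{pd}{d-sp}$ (when $sp<d$; the constraint is vacuous when $sp\ge d$, in which case automatically $\mathcal{D}>0$). To align these with the ranges in \eqref{gammaRadRangeProof}, I will compute
$$\gamma_{\rm rad}-\frac{2qd}{d+\alpha}=\frac{(d-\alpha)\mathcal{D}}{A(d+\alpha)},\qquad \frac{pd}{d-sp}-\gamma_{\rm rad}=\frac{sp(d-1)\mathcal{D}}{(d-sp)A},$$
each of which has the same sign as $\mathcal{D}$. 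Combining these comparisons with the equivalence $(\gamma-\gamma_{\rm rad})/\mathcal{D}>0$ yields precisely the three cases of \eqref{gammaRadRangeProof}: in the regime $\mathcal{D}>0$, $sp<d$, the bound $\gamma>\gamma_{\rm rad}$ already implies $\beta_1\ge 0$ and the upper bound $\gamma\le pd/(d-sp)$ comes from $\beta_2\ge 0$; in the regime $sp\ge d$ the upper bound disappears; in the regime $\mathcal{D}<0$ (which forces $sp<d$) the roles are reversed.

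For the second equivalence, once $T=1$ is identified with $\gamma=\gamma_{\rm rad}$, the extra condition $q(1-sp)=p$ is equivalent to $q=p/(1-sp)$, noting that this implicitly forces $sp<1$ since $p,q\ge 1$. This gives \eqref{gammaRadRangeProof-2}.

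The main obstacle is the bookkeeping in the algebraic identity $(d-1)\mathcal{D}(T-1)=A(\gamma-\gamma_{\rm rad})$ and in checking that the endpoint comparisons $\gamma_{\rm rad}$ vs.\ $\tfrac{2qd}{d+\alpha}$ and $\gamma_{\rm rad}$ vs.\ $\tfrac{pd}{d-sp}$ line up with the correct signs of $\mathcal{D}$; otherwise everything reduces to careful expansion and the observation that $A>0$ under the standing assumptions.
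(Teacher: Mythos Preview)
Your proposal is correct and follows essentially the same route as the paper: both substitute \eqref{betaValue} into $T-1$, reduce it to the identity $(d-1)\mathcal{D}(T-1)=A(\gamma-\gamma_{\rm rad})$ with $A=sp(d+\alpha-2)+(d-\alpha)>0$ (the paper's \eqref{pro1-thm1-rad-proof-p0}), and then combine with the $\beta_1,\beta_2\ge 0$ constraints via the comparison of $\gamma_{\rm rad}$ with $2qd/(d+\alpha)$ (the paper's \eqref{pro1-thm1-rad-proof-p5}). Your additional explicit formula $\tfrac{pd}{d-sp}-\gamma_{\rm rad}=\tfrac{sp(d-1)\mathcal{D}}{(d-sp)A}$ is correct and makes the case analysis slightly more transparent, but it is not a different method.
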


\begin{proof} We have, by \eqref{betaValue},  
\begin{align*}
\beta_1\gamma +   \frac{\beta_2 \gamma (d+\alpha-2)}{d-1}  - 1 
= &  \frac{\big(\gamma(d+\alpha)-2qd \big) \gamma}{\gamma\big(p(d+\alpha)-2q(d-sp)\big)} + \frac{\big(pd-\gamma(d-sp) \big) \gamma (d+\alpha-2) }{(d-1) \gamma\big(p(d+\alpha)-2q(d-sp)\big)} - 1 \\[6pt]
= &  \frac{(d-1) \big(\gamma(d+\alpha)-2qd \big)  + \big(pd-\gamma(d-sp) \big) (d+\alpha-2)}{(d-1)\big(p(d+\alpha)-2q(d-sp)\big)}  \\[6pt]
& \qquad \qquad  - \frac{(d-1)\big(p(d+\alpha)-2q(d-sp)\big) }{(d-1)\big(p(d+\alpha)-2q(d-sp)\big)}.  
\end{align*}
Simplifying the expression after noting $(d-1)(d+\alpha) - d (d + \alpha - 2) = d- \alpha$, we obtain 
\begin{align}\label{gamma=1Rem}
\beta_1\gamma + \frac{\beta_2 \gamma (d+\alpha-2)}{d-1} - 1
=   \frac{\gamma\big(sp(d+\alpha-2)+(d-\alpha) \big) - \big(2qsp(d-1)+p(d-\alpha) \big)}{(d-1)\big(p(d+\alpha)-2q(d-sp) \big)}, 
\end{align}
which yields, by \eqref{gamaRad}, 
\begin{equation}\label{pro1-thm1-rad-proof-p0}
\beta_1\gamma + \frac{\beta_2 \gamma (d+\alpha-2)}{d-1} - 1 = \frac{sp(d+\alpha-2)+(d-\alpha)}{(d-1)} \frac{\bct{\gamma- \gamma_{rad}}}{\big(p(d+\alpha)-2q(d-sp) \big)}.
\end{equation}

On the other hand, since $\beta_1, \beta_2 \ge 0$, $\gamma > 0$, $d+ \alpha - 2 > 0$, we derive from \eqref{betaValue} that the fact $\beta_1, \beta_2 \ge 0$ is equivalent to the one that  
\begin{equation}\label{pro1-thm1-rad-proof-p1}
\frac{2q d}{d+ \alpha} \le \gamma \le \frac{pd}{d-sp} \mbox{ if } p(d + \alpha) - 2 q (d - sp) > 0 \mbox{ and } d - sp > 0, 
\end{equation}
\begin{equation}\label{pro1-thm1-rad-proof-p2}
\frac{2q d}{d+ \alpha} \le \gamma < + \infty \mbox{ if } p(d + \alpha) - 2 q (d - sp) > 0 \mbox{ and } d - sp \le 0, 
\end{equation}
and
\begin{equation}\label{pro1-thm1-rad-proof-p3}
\frac{pd}{d-sp} \le \gamma \le \frac{2q d}{d+ \alpha}  \mbox{ if } p(d + \alpha) - 2 q (d - sp) < 0 \mbox{ and } d - sp > 0.   
\end{equation}
(Note that the case $p(d + \alpha) - 2 q (d - sp) < 0$ and  $d - sp \le 0$ does not occur.)

We next claim that 
\begin{equation}\label{pro1-thm1-rad-proof-p5}
\gamma_{\rm{rad}} - \frac{2 q d}{d + \alpha} = \frac{(d-\alpha)}{(d+\alpha)(sp(d+\alpha-2)+(d-\alpha))} \big(p(d+\alpha)-2q(d-sp) \big). 
\end{equation}
Indeed, we have, by \eqref{gamaRad}, 
\begin{align*}
\gamma_{\rm{rad}} -\frac{2qd}{d+\alpha} &= q+ \frac{\big(q(sp-1)+p \big)(d-\alpha)}{sp(d+\alpha-2)+(d-\alpha)}- \frac{2 q d}{d + \alpha} \\ 
&=  \frac{\big(q(sp-1)+p \big)(d-\alpha)}{sp(d+\alpha-2)+(d-\alpha)}- \frac{q(d-\alpha)}{d + \alpha} \\ 
&=  \frac{(d-\alpha) \Big( qsp(d+\alpha) + (p-q) (d+\alpha)  -q s p (d+\alpha-2)-q(d-\alpha)\Big)}{(d+\alpha)\big(sp(d+\alpha-2)+(d-\alpha)\big)},  
\end{align*}
which yields claim \eqref{pro1-thm1-rad-proof-p5}.

In the case 
$$
\beta_1\gamma +   \frac{\beta_2 \gamma (d+\alpha-2)}{d-1} > 1,
$$
using from \eqref{pro1-thm1-rad-proof-p0}-\eqref{pro1-thm1-rad-proof-p5}, we have the following three situations: 

\begin{itemize}

\item if $\cD > 0$ and $s p < d$, then 
$$
\gamma_{\rm{rad}} < \gamma \le \frac{pd}{d-sp}
$$

\item if $\cD > 0$ and $s p \ge  d$ (this is equivalent to the fact $sp \ge d$), then 
$$
\gamma_{\rm{rad}} < \gamma < + \infty.
$$

\item if $\cD <  0$ and $s p <  d$ (this is equivalent to the fact $\cD<0$), then 
$$
\frac{pd}{d-sp} \le \gamma < \gamma_{\rm{rad}}.
$$
\end{itemize}

In the case 
$$
\beta_1\gamma +   \frac{\beta_2 \gamma (d+\alpha-2)}{d-1} = 1 \quad \mbox{ and } \quad q = \frac{p}{1-sp} 
$$
we have 
$$
\gamma = \gamma_{\rm{rad}}. 
$$
The conclusion follows.  The proof is complete. 
\end{proof}

Depending on the range where $\gamma$ belongs, we intend to employ different techniques to prove \Cref{thm1-rad}. Using \Cref{pro1-thm1-rad-proof}, we divide the range of $\gamma$ into three main sub-ranges. 

\begin{align}\tag{Range A}\label{gammaRange1}
\begin{cases}
\dsp \gamma_{\mathrm{rad}} < \gamma \leq \frac{pd}{d-sp}, &\text{ if } \cD > 0 \mbox{ and } sp<d, \\[6pt] 
\dsp \gamma_{\mathrm{rad}}< \gamma <\infty, &\text{ if } sp \geq d. 
\end{cases}
\end{align}

\begin{align}\tag{Range B}\label{gammaRange2}
\dsp \frac{pd}{d-sp} \leq \gamma < \gamma_{\mathrm{rad}}, \quad &\text{ if } \cD < 0.
\end{align}
 
\begin{align}\tag{Range C}\label{gammaRange3}
\dsp \gamma = \gamma_{\rm{rad}} \text{ and } q= \frac{p}{1-sp}.
\end{align}

We next recall the following result from \cite{NguMalJFA}.

 \begin{lem} $($\cite[Lemma 2.5]{NguMalJFA}$)$ \label{pro1-thm} Let $d \ge 2$, $0 \le s \le 1$, $1 \leq \gamma, \, p, \, q <\infty$, $0<\alpha<d$, and $0 \le \beta_1, \, \beta_2 < + \infty$. Set 
\begin{equation}\label{gammaCS}
\gamma_{cs}:= \frac{p(\alpha+2qs)}{\alpha+sp}.
\end{equation}
and assume that \eqref{betaEquations} holds and $(d+\alpha)p-2q(d-sp)\neq 0$.   Then \eqref{ConCondtn} is equivalent to the fact 
\begin{align}\label{gammaRange-proof}
\begin{cases}
\dsp \gamma_{cs} \leq \gamma \leq \frac{pd}{d-sp} &\text{ if } \cD >0 \mbox{ and } sp < d, \\[6pt]
\dsp  \gamma_{cs} \leq \gamma <\infty  &\text{ if } sp\geq d, \\[6pt]
\dsp \frac{pd}{d-sp} \leq \gamma \leq  \gamma_{cs} &\text{ if } \cD < 0.
\end{cases}
\end{align}
\end{lem}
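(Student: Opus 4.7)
My plan is to mimic, verbatim in spirit, the algebraic strategy just used for \Cref{pro1-thm1-rad-proof}. The only analytic content is a single identity: after substituting the closed forms \eqref{betaValue} for $\beta_1$ and $\beta_2$ into $\beta_1\gamma+\beta_2\gamma-1$ and putting the result over the common denominator $\cD = p(d+\alpha)-2q(d-sp)$, the terms of degree one in $\gamma$ collapse to $(\alpha+sp)\gamma$, the $\gamma$-independent part collapses to $-p(\alpha+2qs)$, and using the definition \eqref{gammaCS} of $\gamma_{cs}$ one obtains the identity
\begin{equation*}
\beta_1\gamma+\beta_2\gamma - 1 \;=\; \frac{(\alpha+sp)\,(\gamma-\gamma_{cs})}{\cD}.
\end{equation*}
Since $\alpha+sp>0$, condition \eqref{ConCondtn} is therefore equivalent to $(\gamma-\gamma_{cs})/\cD\ge 0$.

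Next I invoke the constraints \eqref{pro1-thm1-rad-proof-p1}--\eqref{pro1-thm1-rad-proof-p3}, established in the proof of \Cref{pro1-thm1-rad-proof}, which say that under \eqref{betaEquations} the hypothesis $\beta_1,\beta_2\ge 0$ confines $\gamma$ to one of the intervals $[2qd/(d+\alpha),\,pd/(d-sp)]$, $[2qd/(d+\alpha),\infty)$, or $[pd/(d-sp),\,2qd/(d+\alpha)]$ according to the sign of $\cD$ and of $d-sp$; note in particular that $sp\ge d$ forces $\cD>0$ automatically. A direct computation in the same spirit as \eqref{pro1-thm1-rad-proof-p5} gives
\begin{equation*}
\gamma_{cs}-\frac{2qd}{d+\alpha} \;=\; \frac{\alpha\,\cD}{(\alpha+sp)(d+\alpha)}, \qquad
\gamma_{cs}-\frac{pd}{d-sp} \;=\; \frac{-ps\,\cD}{(\alpha+sp)(d-sp)},
\end{equation*}
so $\gamma_{cs}$ sits strictly between $2qd/(d+\alpha)$ and $pd/(d-sp)$ whenever both are finite, and strictly above $2qd/(d+\alpha)$ when the upper endpoint is $+\infty$.

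Intersecting the admissible $\gamma$-interval with the half-line $(\gamma-\gamma_{cs})/\cD\ge 0$ in each regime then reproduces exactly the three cases listed in \eqref{gammaRange-proof}: when $\cD>0$ and $sp<d$ one is left with $[\gamma_{cs},\,pd/(d-sp)]$; when $sp\ge d$ one is left with $[\gamma_{cs},\infty)$; and when $\cD<0$ one is left with $[pd/(d-sp),\,\gamma_{cs}]$. The proof is purely algebraic, and the only real pitfall is the sign-bookkeeping: one must keep straight simultaneously the signs of $\cD$ and of $d-sp$, and verify in each regime that $\gamma_{cs}$ falls on the correct side of $2qd/(d+\alpha)$ so that the condition $(\gamma-\gamma_{cs})/\cD\ge 0$ trims exactly one endpoint from the admissible interval. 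Once the key identity for $\beta_1\gamma+\beta_2\gamma-1$ is in hand, nothing more than comparing a handful of explicit fractions is required.
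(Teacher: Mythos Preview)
Your argument is correct. The key identity
\[
\beta_1\gamma+\beta_2\gamma-1=\frac{(\alpha+sp)(\gamma-\gamma_{cs})}{\cD}
\]
checks out, as do the two auxiliary computations placing $\gamma_{cs}$ relative to $2qd/(d+\alpha)$ and $pd/(d-sp)$; the case split then follows. One small imprecision: when $s=0$ one has $\gamma_{cs}=pd/(d-sp)=p$, so ``strictly between'' should be ``between'' at the upper endpoint---this does not affect the argument, since the strict inequality $\gamma_{cs}>2qd/(d+\alpha)$ (which relies only on $\alpha>0$ and $\cD\neq 0$) is what guarantees the lower endpoint is trimmed correctly.

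As for comparison: the paper does not prove this lemma at all---it is quoted from \cite[Lemma~2.5]{NguMalJFA} and stated without proof. Your approach, deliberately parallel to the proof of \Cref{pro1-thm1-rad-proof}, is presumably close in spirit to the original argument in \cite{NguMalJFA} (indeed \Cref{intPol} points back to that proof for the identity \eqref{gammaCSeq}, which is exactly your key identity evaluated at $\gamma=\gamma_{cs}$). So there is nothing to contrast here; you have simply supplied a self-contained verification of a result the present paper takes for granted.
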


\begin{rem}\label{intPol}
Let $d \ge 2$, $0< s \le 1$, $1<\gamma<+\infty, \ 1\le p, \, q < + \infty$, $1 <\alpha<d$, $0< \beta_1, \, \beta_2 < + \infty$ be such that \eqref{betaEquations} and $\cD \neq 0$ hold. 
One can check that  
\begin{equation} 
\mbox{$\gamma_{\rm{cs}}$ satisfies \eqref{betaEquations}  and  } \beta_1\gamma_{\rm{cs}} + \beta_2 \gamma_{\rm{cs}} \label{gammaCSeq}= 1
\end{equation}
(see, e.g., the proof of \cite[Lemma 2.5]{NguMalJFA}).  Using \eqref{gammaCSeq}, we obtain 
\begin{equation}
\beta_1\gamma_{cs} + \frac{\beta_2 \gamma_{cs} (d+\alpha-2)}{d-1} - 1 = \frac{\beta_2\gamma_{\rm{cs}}(\alpha-1)}{d-1}. 
\end{equation}
Considering \eqref{pro1-thm1-rad-proof-p0} with $\gamma = \gamma_{cs}$ and using the above identity, we obtain  
\begin{equation}
\frac{\beta_2\gamma_{\rm{cs}}(\alpha-1)}{d-1}=  \frac{sp(d+\alpha-2)+(d-\alpha)}{(d-1)\bct{p(d+\alpha)-2q(d-sp)}} \bct{\gamma_{\rm{cs}}- \gamma_{rad}}. \label{gammaCSandRad}
\end{equation}
 Since $1<\alpha<d$ and $\beta_2> 0$, it follows from \eqref{gammaCSandRad} that 
\begin{equation} \label{intPol-p1}
\mbox{$\gamma_{\rm{rad}}<\gamma_{\rm{cs}}$ if  $\cD>0$}
\end{equation}
and 
\begin{equation} \label{intPol-p2}
\mbox{$\gamma_{\rm{cs}}<\gamma_{\rm{rad}}$ if $\cD <0$.}
\end{equation}
Applying \Cref{thm1} and using \Cref{pro1-thm}, we deduce that \eqref{CSF1-*} holds 
if  $\gamma$ satisfies \eqref{gammaRange-proof}. Consequently,  \eqref{CSF1} holds if $\gamma$ satisfies \eqref{gammaRange-proof}. Using this fact, we only need to prove \Cref{thm1-rad} for $\gamma_{\rm{rad}}<\gamma<\gamma_{\rm{cs}}$, when $\gamma$ varies in \ref{gammaRange1} and for $\gamma_{\rm{cs}}<\gamma<\gamma_{\rm{rad}}$ when $\gamma$ varies in \ref{gammaRange2}.
\end{rem}

\subsubsection{Proof of  \Cref{thm1-rad} when $\gamma$ varies in the \ref{gammaRange1}} 
As mentioned in \Cref{intPol}, one only needs to establish \eqref{CSF1} for 
$$
\gamma_{\rm{rad}} < \gamma < \gamma_{cs}, 
$$ 
which is assumed from later on in this part. 

 Let $\epss>0$ be small enough so that $p(d+\alpha-2\epss) -2q(d-sp)>0.$ This can be done since $\cD>0$.
 Set 
 $$
 \gamma_\eps := q+\frac{\big(q(sp-1)+p \big)(d-\alpha+2\eps)}{sp(d+\alpha-2\epsilon-2) +(d-\alpha+2\epss)}.
$$   
 We claim that \eqref{CSF1} holds for $\gamma = \gamma_\eps$ for $\eps$ sufficiently small. 

We first admit this claim and continue the proof.  We have, by \eqref{gamaRad}, 
 \begin{align}\label{gama-gamaRad}
\gamma_\eps -\gamma_\mathrm{rad}&=  \frac{(q(sp-1)+p)(d-\alpha+2\eps)}{sp(d+\alpha-2\epsilon-2) +(d-\alpha+2\epss)}- \frac{\bct{q(sp-1) +p}(d-\alpha)}{sp(d+\alpha-2)+(d-\alpha)}\notag \\ &= \frac{4\eps sp(d-1)\big(p+q(sp-1) \big)}{\big(sp(d+\alpha-2\epsilon-2) +(d-\alpha+2\epss)\big) \big(sp(d+\alpha-2) +(d-\alpha)\big)}.
\end{align}

We first claim that 
\begin{equation}\label{thm1-A-p1}
p+(sp-1)q>0. 
\end{equation}
Indeed, \eqref{thm1-A-p1} is clear for $sp\geq 1$. We now establish  \eqref{thm1-A-p1} for $sp<1$.  Since  $\cD = p(d + \alpha ) - 2 q (d - sp ) >0$, we obtain using $\alpha<d$
\begin{align}\label{temp1}
q<\frac{p(d+\alpha)}{2(d-sp)}< \frac{dp}{d-sp}.
\end{align}
We have
\begin{equation*}
\frac{p}{1-sp}- \frac{dp}{d-sp}= \frac{p\bct{d-sp-d+dsp}}{(1-sp)(d-sp)} = \frac{sp^2 (d-1)}{(1-sp)(d-sp)}. 
\end{equation*}
Claim \eqref{thm1-A-p1} then follows in the case $sp < 1$. 

Thus \eqref{gama-gamaRad} implies $\gamma_\eps \to \gamma_{\rm{rad}}+$ as $\eps\to0+$. Since \eqref{CSF1} holds for $\gamma = \gamma_{cs}$ by \eqref{gammaCSeq} and \Cref{thm1}, using interpolation (H\"older's inequalities), we derive from the claim that \eqref{CSF1} holds for $\gamma_{\rm{rad}} < \gamma < \gamma_{cs}$.

It hence remains to show that \eqref{CSF1} holds for $\gamma = \gamma_\eps$ for $\eps$ sufficiently small. Let $g\in C_{c, \rm{rad}}^1(\ro^d)$. Since $p(d+\alpha)-2q(d-sp)\neq0$ and \eqref{CSF1} is invariant under scaling, without loss of generality, we can assume that 
\begin{align}\label{qBddAbv1A}
\int_{\ro^d} \int_{\ro^d} \frac{|g(x)-g(y)|^p}{|x-y|^{d+sp}}dx dy = 1= \int_{\ro^d} \int_{\ro^d}\frac{|g(x)|^q|g(y)|^q}{|x-y|^{d-\alpha}}dxdy.
\end{align}
By \eqref{plsEpss} in \Cref{Rad}, it suffices to prove that 
\begin{align}\label{qBddAbv1}
\int_{\ro^d} \frac{|g(x)|^q dx}{|x|^{\frac{d-\alpha}{2}+\epss}} \leq C,
\end{align}
for some constant $C>0$ independent of $g$. Because of \eqref{ruizPlsEpss} with $R = 1 $ in the \Cref{ruiz} below, we only need to show that 
\begin{equation} \label{thm1-A-p00}
\int_{|x|<1} \frac{|g(x)|^q dx}{|x|^{\frac{d-\alpha}{2}+\epss}} < C, 
\end{equation}
for some constant $C>0$ independent of $g$. 

Since $p+(sp-1)q>0$ by \eqref{thm1-A-p1}, we derive from \eqref{gamaRad} that 
$$
q<\gamma_{\mathrm{rad}}.
$$  
This implies, by \eqref{intPol-p1}, that 
$$
q < \gamma_{cs}. 
$$ 
Since  \eqref{CSF1} holds for $\gamma = \gamma_{cs}$ by \eqref{gammaCSeq} and \Cref{thm1}, it follows from  \eqref{qBddAbv1A} that 
\begin{equation} \label{thm1-A-p2}
\int_{|x| < 1} |g(x)|^{\gamma_{cs}} \, dx \le C. 
\end{equation}
Applying H\"older's inequality, we derive that 
\begin{multline}\label{thm1-A-p3}
\int_{|x|<1} \frac{|g(x)|^q dx}{|x|^{\frac{d-\alpha}{2}+\epss}} \leq \left( \int_{|x| < 1} |g(x)|^{\gamma_{cs}} \, d x \right)^{\frac{q}{\gamma_{cs}}} \bct{\int_{|x|<1}  \frac{dx}{\md{x}^{\frac{\gamma_{cs}}{\gamma_{cs}-q} \bct{\frac{d-\alpha}{2} +\epss}  }}}^\frac{\gamma_{cs}-q}{\gamma_{cs}} \\[6pt]
\mathop{\le}^{\eqref{thm1-A-p2}} C \bct{\int_{|x|<1}  \frac{dx}{\md{x}^{\frac{\gamma_{cs}}{\gamma_{cs}-q} \bct{\frac{d-\alpha}{2} +\epss}  }}}^\frac{\gamma_{cs}-q}{\gamma_{cs}}. 
\end{multline}
Since 
$$
d- \frac{\gamma_{cs}}{\gamma_{cs}-q} \bct{\frac{d-\alpha}{2} +\epss} = \frac{1}{\gamma_{cs}-q} \left[  \frac{\alpha\big( p(d+\alpha) -2q(d-sp) \big) }{2(\alpha+sp)}-\gamma_{cs}\epss \right]>0,
$$ for $\epss>0$ small enough, assertion \eqref{thm1-A-p00} follows for $\eps $ sufficiently small. The proof is complete. \qed

\medskip 

The following result  due to Ruiz \cite{Rui} is used in the proof.  
 
\begin{lemma}(\cite[Theorem 1.1]{Rui})\label{ruiz}
Let $d\in \mathbb N$, $0<\alpha<d$, $1\leq q<\infty$. Then for every $\epss>0$ and $R>0$ there exists $C= C(d,\alpha,q,\epss)>0$ such that for all $u\in L^\frac{2dq}{d+\alpha}(\ro^d)$ we have, 

\begin{align}
\int_{|x|>R} \frac{|u(x)|^q}{|x|^{\frac{d-\alpha}{2}+\epss}} dx &\leq \frac{C}{R^\epss} \bct{\int_{\ro^d} \int_{\ro^d} \frac{|u(x)|^q|u(y)|^q}{|x-y|^{d-\alpha}} dx dy}^\frac{1}{2} \label{ruizPlsEpss} \mbox{ and } \\ 
\int_{|x|<R} \frac{|u(x)|^q}{|x|^{\frac{d-\alpha}{2}-\epss}} dx &\leq CR^\epss \bct{\int_{\ro^d} \int_{\ro^d} \frac{|u(x)|^q|u(y)|^q}{|x-y|^{d-\alpha}} dx dy}^\frac{1}{2} \label{ruizMnsEpss}.
\end{align}
\end{lemma}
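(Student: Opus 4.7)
The plan is to perform a dyadic decomposition of $\R^d$ and exploit the fact that on a set of small diameter the Coulomb kernel $|x-y|^{-(d-\alpha)}$ is uniformly bounded below. Set $A_k = \{x \in \R^d : 2^k \leq |x| < 2^{k+1}\}$ for $k \in \zm$, denote $f = |u|^q$, and $f_k = f \chi_{A_k}$. Writing $D(g, h) := \int_{\R^d}\int_{\R^d} \frac{g(x) h(y)}{|x-y|^{d-\alpha}}\, dx\, dy$, the goal becomes to estimate $\int f(x)|x|^{-(d-\alpha)/2 \mp \eps}\, dx$ over $\{|x|>R\}$ or $\{|x|<R\}$ in terms of $D(f,f)^{1/2}$.

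The key localized estimate is as follows. For $x, y \in A_k$ one has $|x - y| \leq 2^{k+2}$, so $|x-y|^{-(d-\alpha)} \geq c\, 2^{-k(d-\alpha)}$ for some constant $c = c(d, \alpha) > 0$. Hence
\begin{equation*}
D(f_k, f_k) \geq c\, 2^{-k(d-\alpha)} \|f_k\|_{L^1}^2, \qquad \text{i.e.,} \qquad \|f_k\|_{L^1} \leq C\, 2^{k(d-\alpha)/2} D(f_k, f_k)^{1/2}.
\end{equation*}
Moreover, since the $f_k$ have pairwise disjoint supports and the Coulomb kernel is non-negative, the cross terms in $D(f,f) = \sum_{k, \ell} D(f_k, f_\ell)$ are non-negative, so $\sum_k D(f_k, f_k) \leq D(f, f)$.

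For \eqref{ruizPlsEpss}, let $k_0$ be the smallest integer with $2^{k_0} \geq R$. On $A_k$ one has $|x|^{-(d-\alpha)/2 - \eps} \leq C\, 2^{-k((d-\alpha)/2 + \eps)}$, hence by the localized bound above and Cauchy-Schwarz,
\begin{align*}
\int_{|x| > R} \frac{f(x)}{|x|^{(d-\alpha)/2 + \eps}}\, dx &\leq C \sum_{k \geq k_0} 2^{-k((d-\alpha)/2 + \eps)} \|f_k\|_{L^1} \\
&\leq C \sum_{k \geq k_0} 2^{-k \eps} D(f_k, f_k)^{1/2} \\
&\leq C \Big(\sum_{k \geq k_0} 2^{-2k\eps}\Big)^{1/2} D(f, f)^{1/2} \leq C R^{-\eps} D(f, f)^{1/2}.
\end{align*}
The companion inequality \eqref{ruizMnsEpss} follows by the same argument with the roles reversed: sum instead over $k < k_0$ and use the geometric sum $\sum_{k < k_0} 2^{2k\eps} \leq C R^{2\eps}$.

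The main (and only) nontrivial point is the localized lower bound on the Coulomb energy of $f_k$, which comes from the diameter of $A_k$; once that is in hand, everything reduces to Cauchy-Schwarz and a geometric summation. The assumption $u \in L^{2dq/(d+\alpha)}$ guarantees finiteness of the right-hand side via the Hardy-Littlewood-Sobolev inequality applied to $f = |u|^q$, so the estimates are non-vacuous.
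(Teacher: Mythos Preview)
Your proof is correct. The paper does not prove this lemma at all: it is quoted verbatim as \cite[Theorem~1.1]{Rui} and used as a black box, so there is no ``paper's own proof'' to compare against. Your dyadic argument is in fact essentially Ruiz's original one: localize $f=|u|^q$ to annuli $A_k$, use that $|x-y|\le 2^{k+2}$ on $A_k$ to get the lower bound $D(f_k,f_k)\ge c\,2^{-k(d-\alpha)}\|f_k\|_{L^1}^2$, drop the non-negative off-diagonal terms to obtain $\sum_k D(f_k,f_k)\le D(f,f)$, and finish with Cauchy--Schwarz and a geometric sum.

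Two cosmetic remarks. First, with your choice of $k_0$ (the least integer with $2^{k_0}\ge R$) the region $\{|x|>R\}$ can contain part of $A_{k_0-1}$, so the sum should start at $k_0-1$; this only changes the constant. Second, in the companion estimate the weight exponent $(d-\alpha)/2-\eps$ may be non-positive for large $\eps$, but on $A_k$ one has $|x|\sim 2^k$ in both directions, so the bound $|x|^{-(d-\alpha)/2+\eps}\le C\,2^{-k((d-\alpha)/2-\eps)}$ holds regardless of the sign; your argument goes through unchanged.
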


\subsubsection{Proof of  \Cref{thm1-rad} when $\gamma$ varies in the \ref{gammaRange2}}

As mentioned in \Cref{intPol}, it suffices to prove \eqref{CSF1} for 
$$
\gamma_{\rm{cs}}<\gamma<\gamma_{\rm{rad}}, 
$$
which will be assumed from later on in this part. 

Let $g\in C^1_{c,\rm{rad}}(\ro^d)$.
Since $p(d+\alpha)-2q(d-sp)\neq0$ and \eqref{CSF1} is invariant under scaling, without loss of generality we can assume that 
\begin{align}\label{qBddAbv1B}
\int_{\ro^d} \int_{\ro^d} \frac{|g(x)-g(y)|^p}{|x-y|^{d+sp}}dx dy = 1= \int_{\ro^d} \int_{\ro^d}\frac{|g(x)|^q|g(y)|^q}{|x-y|^{d-\alpha}}dxdy.
\end{align}

\medskip 
We now consider three cases separately: 

\begin{itemize}
\item Case 1:  $1 < sp < d$. 
\item Case 2:  $sp \leq 1$ and $q(sp-1)+p > 0$.  
\item Case 3: $sp \leq 1$ and $q(sp-1)+p \le 0$. 
\end{itemize}

\medskip 
We now proceed the proof. 

\medskip 

$\bullet$ Case 1: $1 < sp < d$.  Set, for $\eps$ sufficiently small, 
$$
\gamma_\eps= q+\frac{(q(sp-1)+p)(d-\alpha-2\eps)}{sp(d+\alpha+2\epsilon-2) +(d-\alpha-2\epss)}.
$$
 We claim that \eqref{CSF1} holds for $\gamma = \gamma_\eps$ for $\eps$ sufficiently small. 

We first admit this claim and continue the proof.   We have, by \eqref{gamaRad}, 
\begin{align}
\gamma_\eps-\gamma_{\rm{rad}} &= \frac{(q(sp-1)+p)(d-\alpha-2\eps)}{sp(d+\alpha+2\epsilon-2) +(d-\alpha-2\epss)}-\frac{(q(sp-1)+p)(d-\alpha)}{sp(d+\alpha-2) +(d-\alpha)} \notag \\ &= -\frac{4\eps sp(d-1)\big(p+q(sp-1) \big)}{\bct{sp(d+\alpha+2\epsilon-2) +(d-\alpha-2\epss)} \bct{sp(d+\alpha-2) +(d-\alpha)}} \label{gamaEps-gamaRad2}. 
\end{align}
It follows that  $\gamma_\eps\to \gamma_{\rm{rad}}-$ and $\eps\to 0+$ since $sp  > 1$. Now, since 
\eqref{CSF1} holds with $\gamma= \gamma_{cs}$ by \eqref{gammaCSeq}, by interpolation, it suffices to prove the claim. 

Next, we establish the claim. Applying \eqref{mnsEpss} in \Cref{Rad}, it suffices to show that 
\begin{align}\label{q,spBddBlw1}
\int_{\ro^d} \frac{|g(x)|^q}{\md{x}^{\frac{d-\alpha}{2}-\epss}} dx \leq C,
\end{align}
for some constant $C>0$ independent of $g$ and for $\epss>0$ small enough. Using \eqref{ruizMnsEpss} in \Cref{ruiz}, one only needs to prove 
\begin{align}\label{q,spBddBlw1-00}
\int_{|x| > 1} \frac{|g(x)|^q}{\md{x}^{\frac{d-\alpha}{2}-\epss}} dx \leq C. 
\end{align}

We have, by \eqref{gamaRad}, 
$$
q<\gamma_{\rm{rad}}.
$$ 

If  $\gamma_{cs}<q<\gamma_{\mathrm{rad}}$, using \eqref{StLemIneq} and \eqref{qBddAbv1B}, we obtain 
$$
\int_{|x|>1} \frac{|g(x)|^q}{\md{x}^{\frac{d-\alpha}{2}-\epss}} dx \le C \int_{|x| > 1} \frac{d x}{|x|^{\left(\frac{d-\alpha}{2}-\epss \right) + \frac{q(d-sp)}{p} }}
\leq C,  
$$
for $\eps$ sufficiently small since, by $\cD< 0$,  
$$
\frac{d-\alpha}{2} + \frac{q(d-sp)}{p}  > d. 
$$
 
If $q=\gamma_{\rm{cs}}$, then applying \eqref{CSF1} with $\gamma= \gamma_{cs}$ by \eqref{gammaCSeq},  and using \eqref{qBddAbv1B}, we have
$$
\int_{|x|>1} \frac{|g(x)|^q}{|x|^{\frac{d-\alpha}{2}-\eps}} dx \leq \int_{|x|>1}|g(x)|^{\gamma_{\rm{cs}}} dx \leq C.
$$

If $q<\gamma_{cs} <\gamma_{\mathrm{rad}}$, applying \eqref{CSF1} with $\gamma= \gamma_{cs}$ by \eqref{gammaCSeq},  and using \eqref{qBddAbv1B}, we have
\begin{multline}\label{Holder-minus-eps}
\int_{|x|>1} \frac{|g(x)|^q }{|x|^{\frac{d-\alpha}{2}-\epss}} d x \leq \left(\int_{|x|>1} |g(x)|^{\gamma_{cs}} \, dx \right)^{\frac{q}{\gamma_{cs}}} \bct{\int_{|x|>1}  \frac{dx}{\md{x}^{\frac{\gamma_{cs}}{\gamma_{cs}-q} \bct{\frac{d-\alpha}{2} -\epss}  }}}^\frac{\gamma_{cs}-q}{\gamma_{cs}} \\[6pt]
\le C \bct{\int_{|x|>1}  \frac{dx}{\md{x}^{\frac{\gamma_{cs}}{\gamma_{cs}-q} \bct{\frac{d-\alpha}{2} -\epss}  }}}^\frac{\gamma_{cs}-q}{\gamma_{cs}}. 
\end{multline}
 Since $\cD = p(d+\alpha)-2q(d-sp)<0$, it follows that 
 $$d- \frac{\gamma_{cs}}{\gamma_{cs}-q} \bct{\frac{d-\alpha}{2} -\epss} = \frac{1}{\gamma_{cs}-q} \left[  \frac{\alpha\bct{p(d+\alpha) -2q(d-sp)} }{2(\alpha+sp)}+\gamma_{cs}\epss \right]<0,$$ for $\epss>0$ small enough. It follows from \eqref{Holder-minus-eps} that 
\begin{equation}\label{Holder-minus-eps-00}
 \int_{|x|>1} \frac{|g(x)|^q dx}{|x|^{\frac{d-\alpha}{2}-\epss}} \le C. 
\end{equation}
The proof is complete in this case.

\medskip 
$\bullet$ Case 2: $sp \le 1$ and $p + q(sp - 1) > 0$. Set 
$$
\gamma=\gamma_\eps= q+\frac{\big(q(sp-1)+p \big)(d-\alpha-2\eps)}{sp(d+\alpha+2\epsilon-2) +(d-\alpha-2\epss)}, 
$$ 
for $\eps$ sufficiently small. 

From \eqref{gamaRad},  we derive that $\gamma_\eps \to \gamma_{\rm{rad}}-$ as $\eps\to 0+$ (see for e.g \eqref{gamaEps-gamaRad2}). Since 
\eqref{CSF1} holds with $\gamma= \gamma_{cs}$ by \eqref{gammaCSeq}, by interpolation, it suffices to prove \Cref{thm1-rad} for $\gamma_\eps$ with $\eps$ sufficiently small. Applying \eqref{mnsEpss}, we only need to establish
\begin{align}\label{q,spBddBlwAbv1}
\int_{\ro^d} \frac{|g(x)|^q}{\md{x}^{\frac{d-\alpha}{2}-\epss}} dx \leq C,
\end{align}
for some constant $C>0$ independent of $g$ and for $\epss>0$ small enough. Applying \eqref{ruizMnsEpss} in \Cref{ruiz}, we only need to show
\begin{align}\label{q,spBddBlwAbv1-00}
\int_{|x|>1} \frac{|g(x)|^q}{\md{x}^{\frac{d-\alpha}{2}-\epss}} dx \leq C. 
\end{align}
If $q<\frac{dp}{d-sp}$, then $q<\gamma_{\rm{cs}}$ given by \eqref{gammaCS} since $\cD < 0$ and $sp < d$. Similar to 
\eqref{Holder-minus-eps-00}, we obtain \eqref{q,spBddBlwAbv1-00}.
If  $\frac{dp}{d-sp}\leq q$ then it follows, since $p + q (sp - 1) > 0$, that 
$$
\frac{1}{p}-s < \frac{1}{q} \le \frac{1}{p}-\frac{s}{d}.
$$ 
Applying \Cref{wkNi} with $\gamma=q$ and $R = 1$, we have
\begin{equation}\label{q,spBddBlwAbv1-0000}
\int_{|x| > 1} |g(x)|^q \, dx \le C. 
\end{equation}
Assertion \eqref{q,spBddBlwAbv1-00} now follows from  \eqref{q,spBddBlwAbv1-0000} by noting that $\frac{1}{\md{x}^{\frac{d-\alpha}{2}-\epss}} \le 1$ for $|x| > 1$. 

The proof of \eqref{q,spBddBlwAbv1-00} is complete.

$\bullet$ Case 3: $sp \leq 1$ and $p  + q (sp - 1) \le 0$. Since, $p\geq 1$, so we must have $sp<1$ in this case.

We first consider the case $p + q (sp -1) = 0$. Then $\gamma_{\rm{rad}} = q$ by \eqref{gamaRad}. Applying \Cref{wkNi} and using \eqref{qBddAbv1B}, we have
\begin{align}\label{thm1-C-p11}
\int_{|x|>1}|g|^q \leq C. 
\end{align}
On the other hand, by \eqref{qBddAbv1B}, 
\begin{equation} \label{thm1-C-p22}
\int_{|x|<1} |g|^q dx \leq C. 
\end{equation}
From \eqref{thm1-C-p11} and \eqref{thm1-C-p22}, we obtain 
$$
\int_{\R^d} |g(x)|^q \, dx \le C. 
$$ 
Since \eqref{CSF1} holds with $\gamma= \gamma_{cs}$ by \eqref{gammaCSeq}, by interpolation, the conclusion holds for $\gamma_{cs} < \gamma < \gamma_{\rm{rad}} = q$.

\medskip 

We next deal with the case $p + q (sp - 1) < 0$. 
Set
\begin{equation}\label{thm1-B2-gamma3}
\gamma_\eps= q+\frac{\big(q(sp-1)+p\big)(d-\alpha+2\eps)}{sp(d+\alpha-2\epsilon-2) +(d-\alpha+2\epss)}, 
\end{equation}
for $\eps$ sufficiently small. Since  $p + q (sp-1) <0$, it follows from \eqref{gama-gamaRad} that 
$\gamma_\eps \to \gamma_{\rm{rad}}-$ as $\eps\to 0+$. Since 
\eqref{CSF1} holds with $\gamma= \gamma_{cs}$ by \eqref{gammaCSeq}, by interpolation, it suffices to prove \Cref{thm1-rad} for $\gamma_\eps$, i.e.,  to prove 
$$
\int_{\mR^d} |g(x)|^{\gamma_{\eps}} \, dx < C, 
$$
for  $\eps$ sufficiently small.

We have, by \eqref{qBddAbv1B}, 
$$
\int_{|x| < 1} |g(x)|^q \, dx \le C. 
$$
Since $\gamma_{\eps} < \gamma_{\rm{rad}} \le  q$ thanks to the fact $p + q (s p - 1) \le 0$, it follows from H\"older's inequality that 
$$
\int_{|x| < 1} |g(x)|^{\gamma_\eps} \, dx \le C. 
$$
It remains to prove that 
$$
\int_{|x|  >  1} |g(x)|^{\gamma_\eps} \, dx \le C. 
$$

Set 
\begin{equation}\label{thm1-B2-r}
r=\frac{p}{1-sp}.
\end{equation}
We have, by \eqref{gamaRad},  
\begin{align*}
\gamma_{\rm{rad}}-r&= q + \frac{\big(q(sp-1)+p \big)(d-\alpha)}{sp(d+\alpha-2)+(d-\alpha)}  -\frac{p}{1-sp}\\&= \big(q(sp-1)+p \big) \bct{\frac{d-\alpha}{sp(d+\alpha-2)+(d-\alpha)} - \frac{1}{1-sp}}\\&
= -\frac{2(q(sp-1)+p)sp(d-1)}{\big(sp(d+\alpha-2)+(d-\alpha)\big)(1-sp)}. 
\end{align*}
Since $p + q(sp -1) <  0$, it follows that 
\begin{equation} \label{thm1-B2-p1}
r < \gamma_{\rm{rad}} \mathop{<}^\eqref{gamaRad} q. 
\end{equation}
For $\eps > 0$ sufficiently small, set 
\begin{equation}\label{thm1-B2-eta}
\eta = \eta_\eps : = \frac{d-\alpha}{2}+\epss 
\end{equation}
and let $\theta = \theta_\eps \in (0, 1)$ be such that 
$$
\frac{\theta}{r}+\frac{1-\theta}{q} =\frac{1}{\gamma_\eps}
$$
(the existence of $\theta$ follows from \eqref{thm1-B2-p1}). One has 
\begin{equation}\label{thm1-B2-theta}
\theta= \frac{q-\gamma_\eps}{q-r} \frac{r}{\gamma_\eps} \quad \mbox{ and } \quad 1-\theta = \frac{(\gamma_\eps-r)q}{(q-r)\gamma_\eps}.
\end{equation}

We have, by H\"older's inequality,  
\begin{multline}
\int_{|x|>1} |g(x)|^{\gamma_\eps} \, dx = \int_{|x|>1} \bct{|g(x)|^{\theta \gamma_\eps}|x|^\frac{\eta \gamma_\eps(1-\theta)}{q}} \bct{|g(x)|^{(1-\theta )\gamma_\eps}\frac{1}{|x|^{\frac{\eta \gamma_\eps(1-\theta)}{q}}}} \, dx \\[6pt] 
\leq \bct{\int_{|x|>1} |g(x)|^r |x|^\frac{r \eta (1 - \theta) }{\theta q} \, dx}^\frac{\gamma_\eps \theta}{r} \bct{\int_{|x|>1} \frac{|g|^q}{|x|^\eta}\, dx }^\frac{(1-\theta)\gamma_\eps}{q}, 
 \end{multline}
 which yields, by \eqref{thm1-B2-eta}, \eqref{ruizPlsEpss}, and \eqref{qBddAbv1B}  
\begin{equation}
\int_{|x|>1} |g(x)|^{\gamma_\eps} \, dx  \le C \bct{\int_{|x|>1} |g(x)|^r |x|^\frac{r \eta (1 - \theta) }{\theta q} \, dx}^\frac{\gamma_\eps \theta}{r} = C \bct{\int_{|x|>1} |g|^r |x|^{-r\beta}dx}^\frac{\gamma_\eps \theta_\eps}{r}, \label{qgeqGmarad}
\end{equation}
by \eqref{thm1-B2-theta},  where 
\begin{equation} \label{thm1-B2-beta1}
\beta = \beta_\eps : = -\frac{(\gamma_\eps-r)\eta}{(q-\gamma_\eps)r}
\end{equation}

We claim that 
\begin{equation} \label{thm1-B2-beta2}
\beta =-(d-1)s.
\end{equation}

Since, by \eqref{thm1-B2-r},  
$$
q(sp-1)+p = (q-r)(sp-1), 
$$
it follows from \eqref{thm1-B2-gamma3} that 
\begin{equation}\label{thm1-B2-gamma3-2}
\gamma_\eps= q+ \frac{(q-r)(d-\alpha+2\eps)(sp-1)}{sp(d+\alpha-2\epsilon-2) +(d-\alpha+2\epss)}. 
\end{equation}
This implies 
\begin{equation*}
\gamma_\eps-r= \bct{q-r}\bct{1+ \frac{(d-\alpha+2\eps)(sp-1)}{sp(d+\alpha-2\epsilon-2) +(d-\alpha+2\epss)}}, 
\end{equation*}
which yields 
\begin{equation}\label{gamma-r}
\gamma_\eps-r= \frac{2(q-r)sp(d-1)}{sp(d+\alpha-2\epsilon-2) +(d-\alpha+2\epss)}.
\end{equation}
We also have, by \eqref{thm1-B2-gamma3-2}
\begin{equation}\label{q-gamma}
q-\gamma_\eps =-\frac{(q-r)(sp-1)(d-\alpha+2\eps)}{sp(d+\alpha-2\epsilon-2) +(d-\alpha+2\epss)}.
\end{equation}
Combining \eqref{thm1-B2-r}, \eqref{gamma-r}, and \eqref{q-gamma} yields 
\begin{equation*}
\beta_\eps = -\frac{(\gamma_\eps-r)\eta_\eps}{(q-\gamma_\eps)r} = -(d-1)s.
\end{equation*}

Applying \Cref{stWss} with $\beta=-(d-1)s$ and using \eqref{qBddAbv1B} and  \eqref{qgeqGmarad}, we obtain 
\begin{align*} 
\int_{|x|>1} |g|^{\gamma_\eps} dx \leq C. 
\end{align*}

The proof is complete. \qed

\subsubsection{Proof of \Cref{thm1-rad} when $\gamma$ varies in the \ref{gammaRange3}} 

Let $g\in C^1_{c,\rm{rad}}(\ro^d)$.
Since $p(d+\alpha)-2q(d-sp)\neq0$ and \eqref{CSF1} is invariant under scaling, without loss of generality we can assume that 
\begin{align}\label{qBddAbv1B-111}
\int_{\ro^d} \int_{\ro^d} \frac{|g(x)-g(y)|^p}{|x-y|^{d+sp}}dx dy = 1= \int_{\ro^d} \int_{\ro^d}\frac{|g(x)|^q|g(y)|^q}{|x-y|^{d-\alpha}}dxdy.
\end{align}
We first note that $\gamma = q$ in this case.  
Applying \Cref{wkNi} and using \eqref{qBddAbv1B-111}, we have
\begin{align}\label{thm1-C-p1}
\int_{|x|>1}|g|^q \leq C. 
\end{align}
On the other hand, by \eqref{qBddAbv1B-111}, 
\begin{equation} \label{thm1-C-p2}
\int_{|x|<1} |g|^q dx \leq C. 
\end{equation}
The conclusion follows from \eqref{thm1-C-p1} and \eqref{thm1-C-p2}. 
\qed

\section{Optimality of the Range of Parameters} \label{sect-Opt}
In this section, we prove \Cref{thm1-rad-opt} and \Cref{thm1-rad-opt1}. Let $\eta \in C_c^\infty(\ro \setminus \{0\})$ be a nontrivial, nonnegative function with $\rm{spt}\ \eta \subset (-1,1)$. Define
\begin{equation}\label{fam-func}
g_{\lambda, R, S}(x): = \lambda \eta\bct{\frac{|x|-R}{S}}, \forall R>S>0 \text{ and } \lambda>0.  
\end{equation}
Applying \cite[Theorem 1]{BM-GNI}, we have
\begin{equation}\label{GNI-*}
\nrm{g}_{\dot W^{s,p}(\ro^d)} \lesssim \nrm{\nabla g}_{L^p(\ro^d)} +  \nrm{g}_{L^p(\ro^d)} \ \forall g\in C_c^\infty(\ro^d).
\end{equation}
Here and in what follows in this section,  $a \lesssim b$ means that $a \le C b$ for some positive constant $C$ depending only on the parameters in \Cref{thm1-rad-opt} and \Cref{thm1-rad-opt1}.

By a scaling argument, we derive from  \eqref{GNI-*} that 
\begin{equation}\label{GNI}
\nrm{g}_{\dot W^{s,p}(\ro^d)} \lesssim \nrm{g}_{\dot W^{1,p}(\ro^d)}^s \nrm{g}_{L^p(\ro^d)}^{1-s}, \ \forall g\in C_c^\infty(\ro^d).
\end{equation}
Using \eqref{GNI} we can estimate 
\begin{equation}\label{WspEst}
\nrm{g_{\lambda,R,S}}^p_{\dot W^{s,p}(\ro^d)} \lesssim \lambda^p  R^{d-1} S^{1-sp}, \text{ for } 0<s<1 \text{ and } 1\leq p<+\infty.
\end{equation}

On the other hand,  one has, see e.g.,  \cite[(5.8)]{MorSch'18}, 
\begin{equation}\label{CouEst}
  \int_{\ro^d}\int_{\ro^d} \frac{|g_{\lambda,R, S}(x)|^q|g_{\lambda, R, S}(y)|^q}{|x-y|^{d-\alpha}}dxdy \lesssim \begin{cases}\lambda^{2q}R^{d+\alpha-2 }S^2 &\text{ if } 1<\alpha<d, \\ \lambda^{2q} R^{d-1}S^2\log(R/S) &\text{ if } \alpha =1, \\ \lambda^{2q} R^{d-1}S^{1+\alpha} &\text{ if } 0<\alpha<1.
  \end{cases}
\end{equation}
A straightforward computation gives 
\begin{equation}\label{LGammaEst}
\lambda^\gamma \bct{R-S}^{d-1}S \lesssim \nrm{g_{\lambda, R, S}}^\gamma_{L^\gamma(\ro^d)}. 
\end{equation}

We are now ready to give the proofs of \Cref{thm1-rad-opt} and \Cref{thm1-rad-opt1}. We will only focus on the case $0<s<1$. The cases $s=1$ and $s=0$ are either trivial or follow by the same arguments. 

\subsection{Proof of \Cref{thm1-rad-opt}} We establish the conclusion by considering two cases separately: 
\begin{itemize}
\item  {\it Case 1: $\beta_1 \gamma + \frac{d+\alpha-2}{d-1} \beta_2 \gamma < 1$.}

\item  {\it Case 2: $\beta_1 \gamma + \frac{d+\alpha-2}{d-1} \beta_2 \gamma = 1$ and $q(sp-1)+p \neq 0$.}  
\end{itemize}

\medskip 
\noindent {\it Case 1: $\beta_1 \gamma + \frac{d+\alpha-2}{d-1} \beta_2 \gamma < 1$.} In this case, if \eqref{CSF1} holds,  then we can apply it to $g=g_{1,R, 1}$ for large $R$. Therefore, using estimates \eqref{WspEst}, \eqref{CouEst}, \eqref{LGammaEst},  and the inequality \eqref{CSF1} we have, for large $R$,  
\begin{equation}\label{subCritRange}
R^{d-1} \lesssim R^{(d-1)\beta_1\gamma + (d+\alpha-2)\beta_2\gamma}
\end{equation}
Taking $R\to \infty $ in \eqref{subCritRange},  we obtain a contradiction. 

\medskip 
\noindent {\it Case 2: $\beta_1 \gamma + \frac{d+\alpha-2}{d-1} \beta_2 \gamma = 1$ and $q(sp-1)+p \neq 0$.}

Since, in addition, $\gamma$ satisfy \eqref{betaEquations} so we can compute (see for e.g. \eqref{pro1-thm1-rad-proof-p0})
\begin{equation}\label{gammaRadTemp}
\gamma = q+ \frac{\bct{q(sp-1)+p}(d-\alpha)}{sp(d+\alpha-2)+(d-\alpha)}.
\end{equation}

If \eqref{CSF1} holds, then in particular it holds for 
\begin{equation}\label{testFuncCrit}
g=g_{m,R}= \sum_{k=1}^m g_{R^{k\xi_2},R^k, R^{k\xi_1}},\ \forall R>0\mbox{ and } m\in \mathbb{N},
\end{equation}
where $\xi_1\neq 1$ and $\xi_2\in \ro$ to be chosen subsequently. If $\xi_1>1$, then we start with a small enough $0<R<1$ and if $\xi_1<1$, then we start with a large enough $R>>1$ so that the supports of $g_{R^{k\xi_2},R^k, R^{k\xi_1}}$ remain disjoint for all  $k\in \mathbb{N}$. With this choice and with the help of \eqref{WspEst}, \eqref{CouEst}, \eqref{LGammaEst}, and the inequality \eqref{CSF1},  we derive that 
\begin{multline*}
\sum_{k=1}^m R^{k\gamma \xi_2}\bct{R^k-R^{k\xi_1}}^{d-1} R^{k\xi_1} \\[6pt] \lesssim \bct{\sum_{k=1}^m R^{kp\xi_2}R^{k(d-1)}R^{(1-sp)k\xi_1}}^{\beta_1\gamma}\bct{\sum_{k=1}^m R^{2qk\xi_2}R^{k(d+\alpha-2)}R^{2k\xi_1}}^{\beta_2\gamma}.
\end{multline*}
Simplifying this estimate,  we have 
\begin{multline}\label{main-opt-est}
\sum_{k=1}^m R^{k(\gamma \xi_2+\xi_1+d-1)}\bct{1-R^{k(\xi_1-1)}}^{d-1} \\[6pt]
\lesssim \bct{\sum_{k=1}^m R^{k(p\xi_2+(1-sp)\xi_1+d-1)}}^{\beta_1\gamma}\bct{\sum_{k=1}^m R^{k(2q\xi_2+2\xi_1+d+\alpha-2)}}^{\beta_2\gamma}.
\end{multline}
We now choose $\xi_1$ and $\xi_2$ such that
\begin{equation}\label{xi1xi2}
p\xi_2+(1-sp)\xi_1+(d-1)=0=2q\xi_2+2\xi_1+(d+\alpha-2).
\end{equation}
Since $q(sp-1)+p\neq 0$, the exact expressions of $\xi_1$ and $\xi_2$ can be computed from \eqref{xi1xi2} as follows 
 \begin{equation}\label{xi1xi2EE}
 \xi_1= \frac{2q(d-1)-p(d+\alpha-2)}{2(q(sp-1)+p)} \quad \mbox{ and } \quad  \xi_2 = -\frac{sp(d+\alpha-2)+(d-\alpha)}{2(q(sp-1)+p)}.
 \end{equation}
Notice that using \eqref{gammaRadTemp}, we can simplify the expression of $\xi_2$ as follows 

\begin{equation}\label{xi2AE}
\xi_2= -\frac{d-\alpha}{2(\gamma-q)}.
\end{equation}
Making use of \eqref{xi2AE} and \eqref{xi1xi2} we calculate 
\begin{align}\label{xi1xi2gamma}
\gamma \xi_2+ \xi_1+(d-1)&= (\gamma-q)\xi_2+ q\xi_2+\xi_1+(d-1) \notag \\ &= -\frac{d-\alpha}{2} -\frac{d+\alpha-2}{2}+(d-1)=0
\end{align}
Plugging \eqref{xi1xi2} and \eqref{xi1xi2gamma} into \eqref{main-opt-est} we get 

\begin{equation}\label{final-opt-est}
\sum_{k=1}^m \bct{1-R^{k(\xi_1-1)}}^{d-1} \lesssim m^{\beta_1 \gamma+\beta_2\gamma} ,\ \forall m \in \mathbb{N}.
\end{equation}

Since $\beta_1 \gamma + \frac{d+\alpha-2}{d-1} \beta_2 \gamma = 1$ and $1<\alpha<d$, it follows that $\beta_1 \gamma+ \beta_2\gamma <1$. Taking $R\to + \infty$ in \eqref{final-opt-est} if $\xi_1<1$ and taking $R\to 0+$ in \eqref{final-opt-est} if $\xi_1>1$, we obtain a contradiction.

It remains to show that $\xi_1\neq 1$. Indeed, this  follows from the assumption $p(d+\alpha)-2q(d-sp)\neq 0$ and \eqref{xi1xi2EE}, as $\xi_1$ can be rewritten as $\xi_1= 1-\frac{p(d+\alpha)-2q(d-sp)}{2(q(sp-1)+p)}$. 

The proof is complete. 
\qed

\subsection{Proof of \Cref{thm1-rad-opt1}} We prove the result by contradiction. Assume \eqref{CSF1}. Consider the family of radial functions $g_{1, R, 1}$ for $R>1$ (see \eqref{fam-func} for the definition). Using \eqref{WspEst}, \eqref{CouEst}, and \eqref{LGammaEst}, and applying \eqref{CSF1} to $g_{1,R,1}$, we obtain 
\begin{equation}\label{subCritRange1}
R^{d-1} \lesssim \begin{cases}
R^{(d-1)(\beta_1\gamma +\beta_2\gamma)} , &\mbox{ if } 0<\alpha<1, \\ R^{(d-1)(\beta_1\gamma +\beta_2\gamma)}\bct{(\log(R))^{\beta_2\gamma}+1} , &\mbox{ if } \alpha =1.
\end{cases}
\end{equation}
Taking $R\to + \infty$ in \eqref{subCritRange1} and noticing $\beta_1\gamma+\beta_2\gamma<1$, we obtain a contradiction. The conclusion follows. \qed

\footnotesize

\end{document}